\algrenewcommand\algorithmicrequire{\textbf{Input:}}
\algrenewcommand\algorithmicensure{\textbf{Output:}}
\algrenewcommand\algorithmicforall{\textbf{For}}
\newtheorem{theorem}{Theorem}
\newtheorem{lemma}{Lemma}
\newtheorem{proposition}{Proposition}
\newtheorem{corollary}{Corollary}
\newtheorem{definition}{Definition} 
\newtheorem{proof}{Proof}
\newcommand{\R}{\mathbb{R}}
\newcommand{\Z}{\mathbb{Z}}
\newcommand*{\QEDA}{\hfill\ensuremath{\blacksquare}}
\def\ps@pprintTitle{ 
 \let\@oddhead\@empty
 \let\@evenhead\@empty
 \def\@oddfoot{\textit{Working Paper}\hfill}%
 \let\@evenfoot\@oddfoot}
\begin{document}
\begin{frontmatter}

\title{{Performance guarantees of forward and reverse greedy algorithms for minimizing nonsupermodular nonsubmodular functions on a matroid}}

\author[a1]{Orcun Karaca\corref{cor1}}
\ead{okaraca@ethz.ch}
\author[a1]{Daniel Tihanyi}
\ead{tihanyid@ethz.ch}
\author[a2]{Maryam Kamgarpour}
\ead{maryamk@ece.ubc.ca}
\address[a1]{Automatic Control Laboratory, D-ITET, ETH Z{\"u}rich, Switzerland}
\address[a2]{Electrical and Computer Engineering Department, University of British Columbia, Vancouver, Canada}
\cortext[cor1]{Corresponding author}
\begin{abstract}
This letter studies the problem of minimizing increasing set functions, or equivalently, maximizing decreasing set functions, over the base of a matroid. This setting has received great interest, since it generalizes several applied problems including actuator and sensor placement problems in control theory, multi-robot task allocation problems, video summarization, and many others.  We study two greedy heuristics, namely, the forward and the reverse greedy algorithms. We provide two novel performance guarantees for the approximate solutions obtained by these heuristics depending on both the submodularity ratio and the curvature.
\end{abstract}
\begin{keyword}
Combinatorial optimization \sep Greedy algorithm \sep Matroid theory
\end{keyword}
\end{frontmatter}
\section{Introduction}

Set function optimization is an active field of research which is used in a broad range of applications including video summarization in machine learning~\citep{bian2017guarantees,pmlr-v80-chen18b}, splice site detection in computational biology~\citep{elenberg2018restricted,pmlr-v80-chen18b}, actuator and sensor placement problems in control theory~\citep{clark2017submodularity,clark2012leader,guo2019actuator2,guo2019actuator,EffortBounds}, multi-robot task allocation problems in robotics~\citep{gerkey2004formal,jorgensen2017matroid,zhou2018resilient,daniel2021thesis,daniel2021multi,shin2010uav} and many others. In this letter, we study the following case: the problem of minimizing an increasing nonsubmodular and nonsupermodular set function (or equivalently, maximizing a decreasing set function) over the base of a matroid. The objective and the constraints of this problem are general enough to model many of these application instances for which NP-hardness results are available in the literature~\citep{wolsey1999integer}. Thus, it  is  desirable  to  obtain  scalable  algorithms  with  provable suboptimality bounds.

Many studies have adopted greedy heuristics, because of their polynomial-time complexity and the performance bounds they are equipped with~\cite{nemhauser1978analysis,fisher1978analysis,conforti1984submodular,sviridenko2017optimal}. For the forward greedy algorithm applied to our setting, \cite[Theorem 7]{sviridenko2017optimal} provides a performance guarantee based on the notion of \textit{strong}\footnote{We use the term strong curvature to differentiate between the definition of curvature from \citep{sviridenko2017optimal} and the definitions in our work and in~\citep{bian2017guarantees}.} curvature, describing how modular the objective is.
As an alternative, \cite[Theorem 2]{guo2019actuator} provides a guarantee based on the weaker notions of submodularity ratio (describing how submodular the objective is) and curvature (describing how supermodular the objective is). However, this guarantee scales with the cardinality of the ground set. To the best of our knowledge, there exists no forward greedy guarantee applicable to our setting utilizing both submodularity ratio and curvature, simultaneously, which is also problem-size independent. This will be the first goal of this letter.\looseness=-1

An inherent drawback of the forward greedy algorithm is that any performance guarantee has to involve the objective evaluated at the empty set as the reference value. This reference is known to have an undesirable effect in several applications~\citep{zhang2011adaptive,tropp2004greed,chrobak2006reverse,guo2019actuator}. 
An alternative is to adopt the reverse greedy, which excludes the least desirable elements iteratively starting from the full set. In this case, any potential performance guarantee would instead involve the objective function evaluated at the full set, which might be a more preferable reference point. For the reverse greedy applied to our setting, \cite[Theorem 6]{sviridenko2017optimal} provides a performance guarantee again based on the notion of strong curvature. When only cardinality constraints are present, \citep[Theorem~1]{bian2017guarantees} is applicable and it provides a guarantee based on the weaker notions of submodularity ratio and curvature. 
However, to the best of our knowledge, there exists no reverse greedy guarantee applicable to our setting utilizing both curvature and submodularity ratio, simultaneously. This will be the second goal of this letter.

Our contributions are as follows. We obtain a performance guarantee for the forward greedy algorithm applied to minimizing increasing nonsubmodular and nonsupermodular set functions, characterized by submodularity ratio and curvature, over the base of a matroid. This result is presented in Theorem~\ref{thm:fwd}. For the same setting, we then obtain a performance guarantee for the reverse greedy algorithm, see Theorem~\ref{thm:rev}.
Both guarantees utilize a version of the ordering property derived in \citep[Lemma~1]{ilev18}, which is inspired by its continuous polymatroid counterpart from \citep[Theorem~6.1]{conforti1984submodular}. For both guarantees, we demonstrate more efficient greedy notions of the curvature and the submodularity ratio, since the original definitions are computationally intractable. Finally, we provide a comparison of these two theoretical performance guarantees for different values of submodularity ratio and curvature.

In the remainder, Section~\ref{sec:prelim} introduces preliminaries and problem formulation. Sections~\ref{sec:fwdg} and~\ref{sec:rwg} study the forward and  the  reverse  greedy  algorithms,  respectively,  and  derive guarantees. Section~\ref{sec:compar} presents the comparison.

\section{Preliminaries}\label{sec:prelim}

\subsection{Properties of set functions and set constraints}

We introduce well-studied notions from combinatorial optimization literature~\citep{il2003hereditary,schrijver2003combinatorial,oxley2006matroid,welsh2010matroid}. 

Let $V$ be a finite ground set and $f: 2^V \rightarrow \R$ be our set function. In literature, function is called normalized whenever $f(\emptyset)=0$, however, assume this is not necessarily the case. For notational simplicity, we use $j$ and $\{j\}$ interchangeably for singleton sets.

\begin{definition}\label{def:monotonicity}
Function $f$ is \textit{increasing} if $f(S) \leq f(R),$ for all $S \subseteq R \subseteq V$. Function $-f$ is then \textit{decreasing}. If the inequality is strict whenever $S\subsetneq R$, then $f$ is \textit{strictly increasing} and $-f$ is \textit{strictly decreasing}.
\end{definition}

\begin{definition}\label{def:discretederivative}
 For any $S \subseteq V$ and $j\in V$,  \textit{discrete derivative} of $f$ at $S$ with respect to $j$ is given by $\rho_j(S) = f(S\cup j)-f(S)$. 
\end{definition}

If $j\in S$, we have $\rho_j(S)=0$.
For any $R\subseteq V$, we generalize the definition above to $\rho_R(S) = f(S\cup R)-f(S)$.

\begin{definition}\label{def:submodularity}
 Function $f$ is \textit{submodular} if
	  $\rho_j(R)\le\rho_j(S),$
for all $S \subseteq R \subseteq V$, for all $j \in V \setminus R$.
\end{definition}

In several practical problems, the discrete derivative diminishes as $S$ expands yielding the submodularity property, see the examples in \citep{Krause2005Near,bach2013learning}. Unfortunately, functions used in many problems, including the ones we consider, do not have this property. Instead, these problems involve increasing set functions, allowing the use of \textit{submodularity ratio} describing how far a nonsubmodular set function is from being submodular. This property was first introduced by \cite{lehmann2001combinatorial}.

\begin{definition}\label{def:submodularityratio}
The \textit{submodularity ratio} of an increasing function $f$ is the largest scalar $\gamma \in \R_+$ such that $\gamma\rho_j(R)\le\rho_j(S),$
for all $S \subseteq R \subseteq V$, for all $j \in V \setminus R$. Function $f$ with submodularity ratio $\gamma$ is called $\gamma$-submodular. 
\end{definition}

Observe that the definition above is not well-posed unless $f$ is increasing, in other words, both $\rho_j(R)$ and $\rho_j(S)$ are nonnegative.\footnote{Function $f$ is submodular, if and only if $-f$ is supermodular. Thus, one may consider the submodularity ratio of a decreasing function $f$ to be the curvature of $-f$, see Definition~\ref{def:curvature} below.}
It can easily be verified that, for an increasing function $f$, we have $\gamma \in [0,1]$ and submodularity is attained if and only if $\gamma=1$.  

We briefly review an alternative but nonequivalent submodularity ratio notion from \citep{bian2017guarantees,das2011submodular}. The \textit{cumulative submodularity ratio} is the largest scalar $\gamma' \in \R_+$ such that
$\gamma' \rho_R(S) \leq \sum_{j \in R \setminus S} \rho_j(S),$
for all $S,R \subseteq V$. The ratio $\gamma$ of Definition~\ref{def:submodularityratio} satisfies the inequalities for the definition of $\gamma'$, but the reverse argument does not necessarily hold. Hence, $\gamma \leq \gamma'$, see \cite[Appendix B]{guo2019actuator}.
This notion $\gamma'$ is generally restricted to the case when deriving guarantees for greedy heuristics with only cardinality constraints, because only then it allows to derive bounds of the form of~\citep[Lemma~1]{bian2017guarantees} for the linear programming proof from~\citep{conforti1984submodular}. Utilizing the submodularity notion as per Definition~\ref{def:submodularityratio} is needed for the guarantees we will derive.

\begin{definition}\label{def:supermodularity}
 Function $f$ is \textit{supermodular} if
	  $\rho_j(R)\ge\rho_j(S),$
for all $S \subseteq R \subseteq V$, for all $j \in V \setminus R$.
\end{definition}

Other than submodularity, another widely-used notion is supermodularity we defined above, that is, the increasing discrete derivatives property. A function which is both supermodular and submodular is modular/additive. Similar to the case with submodularity, objective functions we consider do not exhibit supermodularity as well. By introducing the \textit{curvature}, that is, how far a nonsupermodular increasing function is from being supermodular, we obtain a more precise description on how the discrete derivatives change.

\begin{definition}\label{def:curvature}
The \textit{curvature} of an increasing function $f$ is the smallest scalar $\alpha \in \R_+$ such that
$\rho_j(R) \ge (1-\alpha)  \rho_j(S) ,$
for all $S \subseteq R \subseteq V$, for all $j \in V \setminus R$. Function $f$ with curvature $\alpha$ is called $\alpha$-supermodular. 
\end{definition}

It can easily be verified that, for an increasing function $f$, we have $\alpha \in [0,1]$ and supermodularity is attained if and only if $\alpha=0$.  A cumulative definition is also applicable, however, we leave it out, see~\citep{karaca2018exploiting}.

Next, we provide two propositions regarding these ratios. The first is an observation relevant for the applications from the literature we discuss. The second will be useful when adopting the reverse greedy algorithm.

	\begin{proposition}\label{prop:bound}
	Suppose $f$ is strictly increasing and $0<\underline{f}\leq \rho_j(S)\leq\overline{f},$ for all $S\subsetneq V,$ for all $j \notin S.$ Then, we have $\gamma\geq {\underline{f}}\big{/} {\overline{f}}$ and $\alpha\leq 1-{\underline{f}}\big{/}{\overline{f}}$.
	\end{proposition}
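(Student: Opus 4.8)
The plan is to verify the two inequalities directly from the definitions of the submodularity ratio (Definition~\ref{def:submodularityratio}) and the curvature (Definition~\ref{def:curvature}), using the uniform two-sided bound $0 < \underline{f} \leq \rho_j(S) \leq \overline{f}$ on the discrete derivatives. Since the hypothesis states $f$ is strictly increasing, every $\rho_j(S)$ with $j \notin S$ is strictly positive, so both ratios are well-posed.

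First I would establish the bound on $\gamma$. Fix any $S \subseteq R \subseteq V$ and any $j \in V \setminus R$; in particular $j \notin S$ and $j \notin R$. By assumption $\rho_j(S) \geq \underline{f}$ and $\rho_j(R) \leq \overline{f}$, hence
\begin{equation*}
\rho_j(S) \;\geq\; \underline{f} \;=\; \frac{\underline{f}}{\overline{f}}\,\overline{f} \;\geq\; \frac{\underline{f}}{\overline{f}}\,\rho_j(R).
\end{equation*}
This shows that $\gamma = \underline{f}/\overline{f}$ satisfies the inequality $\gamma\,\rho_j(R) \leq \rho_j(S)$ required in Definition~\ref{def:submodularityratio} for all admissible $S, R, j$. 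Since $\gamma$ is defined as the \emph{largest} such scalar, we conclude $\gamma \geq \underline{f}/\overline{f}$.

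Next I would treat the curvature $\alpha$ by an essentially symmetric argument. Fix $S \subseteq R \subseteq V$ and $j \in V \setminus R$. We want $\rho_j(R) \geq (1-\alpha)\rho_j(S)$ to hold with $\alpha = 1 - \underline{f}/\overline{f}$, i.e.\ $\rho_j(R) \geq (\underline{f}/\overline{f})\,\rho_j(S)$. Using $\rho_j(R) \geq \underline{f}$ and $\rho_j(S) \leq \overline{f}$,
\begin{equation*}
\rho_j(R) \;\geq\; \underline{f} \;=\; \frac{\underline{f}}{\overline{f}}\,\overline{f} \;\geq\; \frac{\underline{f}}{\overline{f}}\,\rho_j(S).
\end{equation*}
Thus $\alpha = 1 - \underline{f}/\overline{f}$ is \emph{a} scalar satisfying the defining inequality of Definition~\ref{def:curvature}; since curvature is the \emph{smallest} such scalar, $\alpha \leq 1 - \underline{f}/\overline{f}$. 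There is no real obstacle here: the only subtlety is bookkeeping — noting that $j \in V \setminus R$ implies $j \notin S$ as well (so both $\rho_j(S)$ and $\rho_j(R)$ fall under the hypothesis), and checking that $\underline{f}/\overline{f} \in (0,1]$ so that the claimed values of $\gamma$ and $\alpha$ indeed lie in the valid ranges $[0,1]$ noted after Definitions~\ref{def:submodularityratio} and~\ref{def:curvature}. \QEDB
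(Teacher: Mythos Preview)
Your proof is correct and follows essentially the same approach as the paper: both arguments verify the defining inequalities of $\gamma$ and $\alpha$ directly using the uniform bounds $\underline{f}\le\rho_j(\cdot)\le\overline{f}$, with the paper phrasing this as bounding the ratio $\rho_j(S)/\rho_j(R)$ after rewriting $\gamma$ and $1/(1-\alpha)$ as a min and max of such ratios. The only difference is cosmetic.
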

	\begin{proof}
	Since $\rho_j(R)>0$, by reorganizing Definition~\ref{def:submodularityratio}, we obtain $\gamma=\min_{S \subseteq R \subseteq V,\ j \in V \setminus R}\dfrac{\rho_j(S)}{\rho_j(R)}.$ Clearly, the term on the right is lower bounded by ${\underline{f}}\big{/} {\overline{f}}$. Similarly, we can reorganize Definition~\ref{def:curvature} as follows $ \dfrac{1}{(1-\alpha)} =\max_{S \subseteq R \subseteq V,\ j \in V \setminus  R}\dfrac{\rho_j(S)}{\rho_j(R)}$. The term on the right is upper bounded by ${\overline{f}}\big{/} {\underline{f}}.$ A simple manipulation of the inequality gives us the desired result.\hfill$\QEDA$
	\end{proof}
	\begin{proposition}\label{prop:rev}
	Let $\hat{f}(S)=-f(V\setminus S),$ for all $S\subseteq V.$ Let $\hat\gamma$ and  $\hat\alpha$ be the submodularity ratio and the curvature of function $\hat{f}$, respectively. Then, we have
	$\hat\gamma=1-\alpha$ and $\hat\alpha=1-\gamma$.
	\end{proposition}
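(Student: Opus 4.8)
The plan is to unfold the definitions of $\hat\gamma$ and $\hat\alpha$ and show that the quantifier ``for all $S\subseteq R\subseteq V$, for all $j\in V\setminus R$'' is preserved under the complementation map $S\mapsto V\setminus S$, while a discrete derivative of $\hat f$ turns into (the negative of) a discrete derivative of $f$. Concretely, for $j\notin R$ set $A=V\setminus R$ and $B=V\setminus S$, so that $A\subseteq B$ and $j\in A$ (hence $j\in B$). The key computation is
\begin{align*}
\rho_j^{\hat f}(S) &= \hat f(S\cup j)-\hat f(S) = -f\big(V\setminus(S\cup j)\big)+f(V\setminus S) = f(B)-f(B\setminus j) = \rho_j^{\,f}(B\setminus j),
\end{align*}
and likewise $\rho_j^{\hat f}(R)=\rho_j^{\,f}(A\setminus j)$. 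Writing $S'=A\setminus j$ and $R'=B\setminus j$, we have $S'\subseteq R'$ and $j\notin R'$; moreover, as $S$ ranges over all subsets with $S\subseteq R$ and $j\notin R$, the pair $(S',R')$ ranges over \emph{all} pairs with $S'\subseteq R'\subseteq V$ and $j\notin R'$. This is the heart of the argument, and I expect the bookkeeping of this bijection of ``admissible triples'' to be the only place requiring care; everything else is substitution.

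With the translation in hand, I would argue each equality separately. For $\hat\gamma$: by Definition~\ref{def:submodularityratio}, $\hat\gamma$ is the largest scalar with $\hat\gamma\,\rho_j^{\hat f}(R)\le\rho_j^{\hat f}(S)$ for all admissible $S\subseteq R$, $j\notin R$; substituting the identities above, this reads $\hat\gamma\,\rho_j^{\,f}(S')\le\rho_j^{\,f}(R')$ for all admissible $S'\subseteq R'$, $j\notin R'$, i.e.\ $\rho_j^{\,f}(R')\ge\hat\gamma\,\rho_j^{\,f}(S')$. Comparing with Definition~\ref{def:curvature} (curvature $\alpha$ is the \emph{smallest} scalar with $\rho_j^{\,f}(R')\ge(1-\alpha)\rho_j^{\,f}(S')$), the largest admissible $\hat\gamma$ is exactly $1-\alpha$, so $\hat\gamma=1-\alpha$. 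Symmetrically, for $\hat\alpha$: by Definition~\ref{def:curvature}, $\hat\alpha$ is the smallest scalar with $\rho_j^{\hat f}(R)\ge(1-\hat\alpha)\rho_j^{\hat f}(S)$, which becomes $\rho_j^{\,f}(S')\ge(1-\hat\alpha)\rho_j^{\,f}(R')$; since $f$ is increasing all these derivatives are nonnegative, this is equivalent to $(1-\hat\alpha)\rho_j^{\,f}(R')\le\rho_j^{\,f}(S')$, and comparing with Definition~\ref{def:submodularityratio} (largest $\gamma$ with $\gamma\rho_j^{\,f}(R')\le\rho_j^{\,f}(S')$) forces $1-\hat\alpha=\gamma$, i.e.\ $\hat\alpha=1-\gamma$.

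One preliminary point to dispatch first: the definitions of $\hat\gamma$ and $\hat\alpha$ only make sense for an increasing function, so I would note at the outset that $\hat f$ is increasing because $f$ is — if $S\subseteq R$ then $V\setminus R\subseteq V\setminus S$, hence $f(V\setminus R)\le f(V\setminus S)$, i.e.\ $\hat f(S)=-f(V\setminus R)\le\dots$ wait, $\hat f(S)=-f(V\setminus S)\ge -f(V\setminus R)=\hat f(R)$ — so in fact monotonicity is reversed unless handled via the same complement substitution; the cleanest route is to observe $\hat f(S)=-f(V\setminus S)$ has $\rho_j^{\hat f}(S)=\rho_j^{\,f}(V\setminus(S\cup j))\ge 0$ by monotonicity of $f$, which is exactly what makes Definitions~\ref{def:submodularityratio} and~\ref{def:curvature} well-posed for $\hat f$. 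After that, the two displayed identities and the optimality characterizations of $\gamma,\alpha$ (as in the proof of Proposition~\ref{prop:bound}) finish the argument in a couple of lines each.
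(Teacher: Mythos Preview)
Your proposal is correct and follows essentially the same route as the paper: compute $\hat\rho_j(S)=\rho_j\big((V\setminus S)\setminus j\big)$, observe that the complementation $S\mapsto (V\setminus S)\setminus j$ bijects admissible triples $(S,R,j)$ to admissible triples $(S',R',j)$, and then match the defining inequalities for $\hat\gamma,\hat\alpha$ against those for $1-\alpha,1-\gamma$. One small correction to your monotonicity aside: from $S\subseteq R$ you get $f(V\setminus R)\le f(V\setminus S)$ and hence $\hat f(S)=-f(V\setminus S)\le -f(V\setminus R)=\hat f(R)$, so $\hat f$ \emph{is} increasing directly (your inequality there has a sign slip), though your derivative argument $\hat\rho_j(S)=\rho_j(V\setminus(S\cup j))\ge 0$ also establishes this correctly.
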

\begin{proof}
Observe that function $\hat{f}$ is increasing, hence submodularity ratio and curvature are well-defined.
Let $\hat{\rho}_j(S) = \hat{f}(S\cup j)-\hat{f}(S)={f}(V\setminus S)-{f}(\{V\setminus S\}\setminus j\}),$ for all $S,j$. 
The submodularity ratio of function $\hat{f}$ is the largest scalar $\hat{\gamma}$ such that $\hat{\gamma}\hat{\rho}_j(R)\le\hat{\rho}_j(S),$
for all $S \subseteq R \subseteq V$, for all $j \in V \setminus R$. Using the definition of $\hat{\rho}_j$, this ratio is also the largest scalar $\hat{\gamma}$ such that $ \hat{\gamma}  \rho_j(S)\le \rho_j(R),$
for all $S \subseteq R \subseteq V$, for all $j \in V \setminus R$. Thus, 	$\hat\gamma=1-\alpha$ by Definition~\ref{def:curvature}. Using a similar reasoning, one would also obtain $\hat\alpha=1-\gamma$.
\hfill$\QEDA$
\end{proof}

Many combinatorial optimization problems from the literature are subject to constraints that are more complex than simple cardinality constraints, see the examples in~\citep{krause2011submodular,tzoumas2018resilient}. Among those, we introduce matroids. They will capture problems of interest in Section~\ref{sec:probform}. Moreover, they are known to allow performance guarantees for greedy heuristics thanks to their specific properties outlined below~\citep{edmonds1971matroids,welsh1970matroid}.

\begin{definition}\label{def:matroid}
A \textit{matroid} $\mathcal{M}$ is an ordered pair $(V,\mathcal{F})$ consisting of a ground set $V$ and a collection $\mathcal{F}$ of subsets of $V$ which satisfies \begin{itemize}
    \item[(i)] $\emptyset \in \mathcal{F}$, \item[(ii)] if $S,R \in \mathcal{F}$ and $R\subsetneq S$, then $R\in \mathcal{F}$, \item[(iii)] if $S_1$,$S_2\in \mathcal{F}$ and $|S_1|<|S_2|$, there exists $j \in S_2\setminus S_1$ such that $j\cup S_1\in \mathcal{F}$.
\end{itemize} Every set in $\mathcal{F}$ is called \textit{independent}. Maximum independent sets refer to those with the largest cardinality, and they are called the bases of a matroid.
\end{definition}

Clearly, all bases have the same cardinality by property \textit{(iii)}.
This last property of a matroid is considered as the generalization of the linear independence relation from linear algebra. Intuitively, this property will later let us to keep track of the elements that the greedy algorithm is missing from the optimal solution. 

To adopt the reverse greedy algorithm, an additional concept will be required, that is, the dual of a matroid.

\begin{definition}
\label{def:dual of a matroid}
Given a matroid $\mathcal{M}=(V,\mathcal{F})$, let $\hat{\mathcal{F}} = \{U\text{ }|\text{ }\exists \text{ a base}$ $ M\in \text{  $\mathcal{F}$ such}$ $\text{that } U\subseteq V\setminus M\}$. The pair $\hat{\mathcal{M}}=(V,\hat{\mathcal{F}})$ is called the \textit{dual} of the matroid $(V,\mathcal{F})$. 
\end{definition}

The pair $\hat{\mathcal{M}}=(V,\hat{\mathcal{F}})$ satisfies all the axioms of a matroid.
Suppose $\{M_i\}_{i=1}^q$ is the collection of all bases of matroid $(V,\mathcal{F})$. Then, $\{V\setminus{M_i}\}_{i=1}^q$ defines the collection of all bases for the dual $(V,\hat{\mathcal{F}})$, see \citep[Ch. 2]{welsh2010matroid} and \citep[Lemma 1]{guo2019actuator}.

\subsection{Problem formulation}\label{sec:probform}

Our goal is to solve
\begin{equation}
\begin{aligned}
& \underset{S\subseteq V}{\min}\
 f(S),\;\text{increasing,}\;\text{$\gamma$-submodular},\;\text{$\alpha$-supermodular} \\
&\ \mathrm{s.t.}\ S\in \mathcal{F}\text{,\, $\mathcal{M}=(V,\mathcal{F})$ is a matroid,}\, |S|= N, 
\end{aligned}
\label{eq:generalized}
\end{equation}
where the cardinality of any base of $\mathcal{M}$ is given by $N\in\Z_+$. The guarantees we derive will be applicable as long as the cardinality of any base of $\mathcal{M}$ is larger than or equal to $N$. Note that if not, the problem would be infeasible. We can reformulate such problems as \eqref{eq:generalized}, since the intersection of a uniform matroid (i.e., $\{S\subseteq V\,\rvert\,|S|\le Q\},$ where $Q\in\Z_+,$ $Q\le |V|$) and any matroid results in another matroid. 
Finally, let $S^*$ denote the optimal solution of \eqref{eq:generalized}. 
\looseness=-1

Let us briefly explain three of the well-studied applications the problem above can model. 

$\bullet$ In multi-robot task allocation, the goal is to distribute a given set of tasks to a fleet of robots. The ground set $V$ is the set of all robot-task pairs, and the constraints are given by a partition matroid and a cardinality constraint. The objective is a measure defined over the set partitions. For instance, in~\citep{daniel2021multi}, the goal is to maximize the success probability of a mission which decreases when more and more tasks are allocated to the robots. This measure is both nonsubmodular and nonsupermodular. Observe that such problems can equivalently be stated as the minimization of an increasing function as in~\eqref{eq:generalized}.

$\bullet$ In actuator placement problem,
the goal is to select  a  subset  from a  finite  set  of  possible  placements when designing controllers over a large-scale network.
The ground set $V$ is the set of all possible placements. The constraints can be a cardinality constraint on the number of actuators, and also a controllability requirement (a matching matroid). The objective is then a desired network performance metric. For instance, in~\citep{guo2019actuator}, the goal is to minimize an energy consumption metric, which increases as we pick more and more actuators to remove from the network (the so-called actuator removal problem). Thus, this problem maps to~\eqref{eq:generalized}. Such metrics are well-known to be nonsubmodular and nonsupermodular, and the literature obtains bounds on the submodularity ratio and the curvature based on versions of Proposition~\ref{prop:bound} utilizing eigenvalue inequalities, see \citep[Propositions 1 and 2]{summers2017performance}. 

$\bullet$ In video summarization, 
the goal is to to pick a few frames from a video which summarize it. The ground set $V$ is the set of all frames, and the constraints are both a cardinality constraint and a partition constraint, for instance, to extract one representative frame from every minute.
The objective is a summary quality measure defined over a set of frames.
For instance, in~\citep{pmlr-v80-chen18b}, the goal is to maximize an objective that favors subsets with higher diversity. This problem can easily be reformulated such that it maps to~\eqref{eq:generalized}.

\section{Greedy algorithms}
\subsection{Forward greedy algorithm and the performance guarantee}\label{sec:fwdg}
For Algorithm~\ref{alg:fwd}, the following definitions and explanations are in order. At iteration $t$, the forward greedy algorithm chooses $s_t := S^t\setminus S^{t-1}$ with the corresponding discrete derivative $\rho_{t} := f(S^{t})-f(S^{t-1})$. In Line~5, we have a matroid feasibility check. We assume that this can be done in polynomial-time, which is the case for all the applications we discussed above, e.g., \cite[\S 5.B\&C]{guo2019actuator}. Thanks to the properties of a matroid, we do not need to reconsider an element that has already been rejected by the feasibility check. To this end, the set $U^t$ denotes the set of elements having been considered by the matroid feasibility check before choosing $s_{t+1}$. The final forward greedy solution is $S^\mathsf{f}:=S^N$, and it is a base of  $(V,\mathcal{F})$, since it lies in $\mathcal{F}$ and has cardinality $N$ by the properties of a matroid.

\begin{algorithm}[t]
        \caption{Forward Greedy Algorithm}
        \label{alg:fwd}
        \begin{algorithmic}[1]
        \Require Set function $f$, ground set $V$, matroid $(V,\mathcal{F})$, cardinality constraint $N$
        \Ensure Forward greedy solution $S^\mathsf{f}$
        \Function {ForwardGreedy}{$f, V,{\mathcal{F}}, N$}
        
        \State {$S^0 = \emptyset$, $U^0 = \emptyset$, $t=1$}
        \While{$|S^{t-1}|<N$}
            \State ${j^*(t)}= \arg\min_{j\in V\setminus U^{t-1}}\rho_j(S^{t-1})$
            \If {$S^{t-1}\cup j^*(t)\notin\mathcal{\mathcal{F}}$}
        \State $U^{t-1}\gets U^{t-1}\cup j^*(t)$
        \Else
        \State $\rho_{t} \gets\rho_{j^*(t)}(S^{t-1})$ and $s_t = j^*(t)$
        \State $S^{t} \gets S^{t-1}\cup j^*(t)$ and $U^{t}\gets U^{t-1}\cup j^*(t)$
        \State $t\gets t+1$
    \EndIf    
            \EndWhile
            \State $S^\mathsf{f} \gets S^N $
        \EndFunction
        \end{algorithmic}
    \end{algorithm}

Main result is shown in the following theorem.

\begin{theorem}
\label{thm:fwd}
If Algorithm \ref{alg:fwd} is applied to \eqref{eq:generalized}, then
\begin{equation}
\label{eq:finalBound}
\frac{f(S^\mathsf{f})-f(\emptyset)}{f(S^*)-f(\emptyset)} \leq \dfrac{1}{\gamma(1-\alpha)}.
\end{equation}
\end{theorem}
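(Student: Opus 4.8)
The plan is to bound the progress made at each step of the forward greedy in terms of the gap to the optimum, using the submodularity ratio $\gamma$ to compare greedy derivatives against derivatives ``toward'' $S^*$, and the curvature $\alpha$ to relate those derivatives back to the increments of $f$ along the optimal set. First I would fix notation: let $S^\mathsf{f} = \{s_1,\dots,s_N\}$ in the order chosen by the algorithm, with $S^t = \{s_1,\dots,s_t\}$ and $\rho_t = f(S^t) - f(S^{t-1})$. The telescoping identity $f(S^\mathsf{f}) - f(\emptyset) = \sum_{t=1}^N \rho_t$ will be the left-hand-side handle. Because $f$ is increasing, each $\rho_t \ge 0$.

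The core combinatorial step is the matroid exchange argument: I would invoke the ordering property (the paper cites \citep[Lemma~1]{ilev18}, the discrete analogue of \citep[Theorem~6.1]{conforti1984submodular}) to produce a bijection $\pi$ between $S^\mathsf{f}$ and the optimal base $S^*$ such that, at each step $t$, the element $\pi(s_t) \in S^*$ was available to the greedy when it picked $s_t$ — i.e., $S^{t-1} \cup \pi(s_t) \in \mathcal{F}$ and $\pi(s_t) \notin U^{t-1}$. Greedy optimality at step $t$ then gives $\rho_t \le \rho_{\pi(s_t)}(S^{t-1})$. Next I would apply the submodularity ratio in the direction of decreasing sets: since $\emptyset \subseteq S^{t-1}$ and $\pi(s_t) \notin S^{t-1}$, Definition~\ref{def:submodularityratio} yields $\gamma\,\rho_{\pi(s_t)}(S^{t-1}) \le \rho_{\pi(s_t)}(\emptyset)$, hence $\gamma\,\rho_t \le \rho_{\pi(s_t)}(\emptyset)$. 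Summing over $t$ and using that $\pi$ is a bijection onto $S^*$,
\begin{equation}
\gamma\bigl(f(S^\mathsf{f}) - f(\emptyset)\bigr) \;\le\; \sum_{e \in S^*} \rho_e(\emptyset).
\end{equation}

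It remains to bound $\sum_{e \in S^*} \rho_e(\emptyset)$ by $\tfrac{1}{1-\alpha}\bigl(f(S^*) - f(\emptyset)\bigr)$. Write $S^* = \{e_1,\dots,e_N\}$ in any order, with partial unions $T^k = \{e_1,\dots,e_k\}$, so that $f(S^*) - f(\emptyset) = \sum_{k=1}^N \rho_{e_k}(T^{k-1})$. For each $k$, since $\emptyset \subseteq T^{k-1}$ and $e_k \notin T^{k-1}$, the curvature bound of Definition~\ref{def:curvature} gives $\rho_{e_k}(T^{k-1}) \ge (1-\alpha)\,\rho_{e_k}(\emptyset)$, i.e. $\rho_{e_k}(\emptyset) \le \tfrac{1}{1-\alpha}\rho_{e_k}(T^{k-1})$ (valid since $\alpha \in [0,1)$; the boundary case $\alpha = 1$ makes the claimed bound trivially $\infty$). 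Summing gives $\sum_{e \in S^*}\rho_e(\emptyset) \le \tfrac{1}{1-\alpha}\bigl(f(S^*) - f(\emptyset)\bigr)$, and combining with the previous display yields $\gamma(1-\alpha)\bigl(f(S^\mathsf{f}) - f(\emptyset)\bigr) \le f(S^*) - f(\emptyset)$, which is \eqref{eq:finalBound} after rearranging (note $f(S^*) - f(\emptyset) \ge 0$, and it is positive in the nontrivial case, so the division is legitimate).

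The main obstacle is establishing the exchange bijection $\pi$ with the ``availability at the right step'' property, i.e. adapting \citep[Lemma~1]{ilev18} to the greedy trajectory here: one must handle the set $U^{t-1}$ of elements already rejected by the feasibility check, and argue that the rejected elements never lie in $S^*$ at a stage where they would be needed — this is exactly where the matroid augmentation axiom (iii) and the hereditary property are used, and it is the only genuinely non-routine ingredient. The two inequalities from $\gamma$ and $\alpha$ are then immediate from the definitions, each applied with the empty set as the ``small'' set, which is what keeps the bound problem-size independent.
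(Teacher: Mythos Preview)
Your proposal is correct and follows essentially the same route as the paper: both invoke the ordering lemma (Lemma~\ref{lem:ord}) to match each greedy step $s_t$ with an element of $S^*$ that was feasible at that step, then apply the submodularity ratio with $\emptyset$ as the small set to bound $f(S^\mathsf{f})-f(\emptyset)$ by $\tfrac{1}{\gamma}\sum_{e\in S^*}\rho_e(\emptyset)$, and finally apply the curvature with $\emptyset$ as the small set to bound $\sum_{e\in S^*}\rho_e(\emptyset)$ by $\tfrac{1}{1-\alpha}\bigl(f(S^*)-f(\emptyset)\bigr)$. Your argument is in fact slightly more streamlined than the paper's: the paper splits the greedy sum into the cases $s_t\in S^*$ and $s_t\notin S^*$ and handles them separately before recombining, whereas you apply the ordering inequality $\rho_t\le\rho_{\pi(s_t)}(S^{t-1})$ uniformly (which is harmless since the inequality is an equality on the $s_t\in S^*$ part, where $\pi(s_t)=s_t$); your concern about rejected elements in $U^{t-1}$ is resolved exactly as you suspect, via the hereditary axiom: if $S^{t-1}\cup m_t\in\mathcal{F}$ then every $S^{t'-1}\cup m_t\subseteq S^{t-1}\cup m_t$ is also independent, so $m_t$ could not have been rejected earlier.
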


We first need the following lemma.
\begin{lemma}\label{lem:ord}
   For any base $M\in \mathcal{F}$, the elements of $M=\{m_1,\ldots,m_N\}$ can be ordered so that $$\rho_{m_t}(S^{t-1})\geq\rho_{t}=\rho_{s_t}(S^{t-1}),$$ holds for $t=1,\ldots,N.$  Moreover, whenever $s_t\in M$, we have that $m_t=s_t$.
\end{lemma}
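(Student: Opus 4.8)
The plan is to construct the ordering of $M = \{m_1,\dots,m_N\}$ inductively, paralleling the iterations of the forward greedy algorithm, using the matroid exchange axiom (iii) to guarantee at every step that an as-yet-unassigned element of $M$ can legally sit on top of $S^{t-1}$. Concretely, I would maintain the invariant that after $t$ steps the partial assignment $m_1,\dots,m_t$ has been chosen so that $\{s_1,\dots,s_t\}\cap M \subseteq \{m_1,\dots,m_t\}$, i.e. every element of $M$ that the greedy has already picked has been matched to itself, and the remaining $m_{t+1},\dots,m_N$ are exactly the unpicked elements of $M$. To pick $m_t$ at step $t$: if $s_t\in M$, set $m_t = s_t$ (this immediately gives the ``moreover'' clause, and $\rho_{m_t}(S^{t-1}) = \rho_t$ holds with equality). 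If $s_t\notin M$, then I apply axiom (iii) to the independent sets $S^{t-1}$ (of size $t-1$) and $M$ (of size $N \ge t$) to extract an element $m_t \in M\setminus S^{t-1}$ with $S^{t-1}\cup m_t \in \mathcal F$; I then need to argue this $m_t$ can be chosen among the not-yet-assigned elements $m_t,\dots,m_N$, which follows because all previously assigned $m_i$ with $m_i\notin\{s_1,\dots,s_{t-1}\}$ would contradict the invariant — more carefully, axiom (iii) gives an element of $M\setminus S^{t-1}$, and since $S^{t-1} \supseteq \{s_1,\dots,s_{t-1}\}$, any element of $M\setminus S^{t-1}$ is automatically unassigned under the invariant.

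The key inequality $\rho_{m_t}(S^{t-1})\ge \rho_t$ is where the greedy selection rule enters. At iteration $t$ the algorithm sets $s_t = j^*(t) = \arg\min_{j\in V\setminus U^{t-1}}\rho_j(S^{t-1})$ among all elements not yet rejected by the feasibility check, and it does so precisely because $S^{t-1}\cup j^*(t)\in\mathcal F$. I must therefore show $m_t \in V\setminus U^{t-1}$, i.e. that $m_t$ was never rejected by an earlier feasibility check. This is exactly the point flagged in the paragraph preceding the algorithm: ``thanks to the properties of a matroid, we do not need to reconsider an element that has already been rejected.'' If $m_t$ had been rejected at some earlier iteration $r \le t$, that would mean $S^{r-1}\cup m_t \notin \mathcal F$; but $S^{r-1}\subseteq S^{t-1}$ and $S^{t-1}\cup m_t\in\mathcal F$, and by the downward-closure axiom (ii) any subset of an independent set is independent, so $S^{r-1}\cup m_t\in\mathcal F$ — contradiction. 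Hence $m_t\notin U^{t-1}$, so $m_t$ was a candidate in the $\arg\min$ at step $t$, and minimality of $\rho_{s_t}(S^{t-1})$ over that candidate set gives $\rho_{m_t}(S^{t-1})\ge \rho_{s_t}(S^{t-1}) = \rho_t$.

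I expect the main obstacle to be the bookkeeping that makes the induction airtight: one has to simultaneously track (a) which elements of $M$ remain unassigned, (b) that the greedy's accepted set $S^{t-1}$ always contains $\{s_1,\dots,s_{t-1}\}$ and has size exactly $t-1$ (so axiom (iii) applies with the right cardinality gap since $|M|=N\ge t$), and (c) that the element produced by axiom (iii) is indeed unassigned. The ``monotone rejection'' argument via axiom (ii) — that a feasibility rejection at an earlier, smaller accepted set persists, so $U^{t-1}\cap M$ stays controlled — is the conceptual crux; once it is in place, the $\arg\min$ comparison and the self-matching case $m_t=s_t$ are routine. One subtlety worth stating explicitly: when $|M| > N$ (the ``larger than or equal to $N$'' case mentioned after \eqref{eq:generalized}), the same argument goes through verbatim since axiom (iii) only needs $|M| > |S^{t-1}| = t-1$ for each $t\le N$, which still holds.
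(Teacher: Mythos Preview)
Your overall strategy---match each greedy step $s_t$ to an element $m_t\in M$ via the exchange axiom (iii), then invoke the greedy $\argmin$ rule---is the paper's strategy. Your argument that $m_t\notin U^{t-1}$ via downward closure (axiom (ii)) is correct and in fact more explicit than the paper, which simply asserts ``since $j$ is not the element chosen by the greedy algorithm.''

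However, there is a genuine gap in your bookkeeping. Your stated invariant ``the remaining $m_{t+1},\dots,m_N$ are exactly the unpicked elements of $M$'' cannot be maintained: it would force $\{m_1,\dots,m_t\}=M\cap S^t$, hence $|M\cap S^t|=t$, i.e.\ $S^t\subseteq M$, which fails the moment the greedy picks any element outside $M$. Consequently, when you apply axiom (iii) to the pair $(S^{t-1},M)$, the element $j\in M\setminus S^{t-1}$ you extract may very well be a previously assigned $m_i$ with $i<t$ and $s_i\notin M$: such an $m_i$ lies in $M$ but need not lie in $S^{t-1}=\{s_1,\dots,s_{t-1}\}$. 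Your claim that ``any element of $M\setminus S^{t-1}$ is automatically unassigned under the invariant'' is therefore unjustified, and the forward construction as written can assign the same element of $M$ twice.

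The paper sidesteps this by running the induction \emph{backward}. Having already chosen $m_{t+1},\dots,m_N$, it sets $\tilde M_t=M\setminus\{m_{t+1},\dots,m_N\}$, which is independent by axiom (ii) and has cardinality exactly $t$. Axiom (iii) applied to the pair $(S^{t-1},\tilde M_t)$, of sizes $t-1$ and $t$, then yields $j\in\tilde M_t\setminus S^{t-1}$, which is unassigned by construction since $\tilde M_t$ \emph{is} the unassigned set. The direction of induction is exactly what makes the cardinalities line up: a forward induction using the shrinking unassigned set $M\setminus\{m_1,\dots,m_{t-1}\}$ of size $N-t+1$ would need $t-1<N-t+1$ for axiom (iii), and this fails once $t>N/2$.
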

\begin{proof}
For this proof, we extend a method from a similar ordering property which is derived when applying greedy heuristics on dependence systems~\citep[Lemma~1]{ilev18} (specifically on comatroids, which are the complementary notion of matroids in independence systems, see also~\citep{il2003hereditary,il2006performance, karaca2019}). \citep[Lemma~1]{ilev18} was originally inspired by a study on greedy heuristics over integral polymatroids (that is, a continuous extension of matroids), see \citep[Theorem~6.1]{conforti1984submodular}.

We prove by induction. Assume the elements $\{m_{t+1},\ldots, m_N \}$ are found. Let $\tilde{M}_t=M\setminus \{m_{t+1},\ldots, m_N \}$.
If $s_t\in \tilde{M}_t$, we let $m_t=s_t,$ and the condition is satisfied, and the second statement of the lemma holds. If $s_t\not\in \tilde{M}_t$, by property~\textit{(iii)} of Definition~\ref{def:matroid} and $\tilde{M}_t\in\mathcal{F}$, we know that there exists $j \in \tilde{M}_t\setminus S^{t-1}$ such that $j\cup S^{t-1}\in \mathcal{F}$. Moreover, $\rho_{j}(S^{t-1})\geq\rho_{s_t}(S^{t-1})$, since $j$ is not the element chosen by the greedy algorithm. Hence, we pick $m_t=j.$ The existence of $m_N$ follows from property~\textit{(iii)} of Definition~\ref{def:matroid}: there exists $m_N\in M\setminus S^{N-1}$ such that $m_N\cup S^{N-1}\in \mathcal{F}$. This concludes the proof. \hfill$\QEDA$
\end{proof}

This lemma plays a significant role in obtaining suboptimality bounds, which becomes clear once we pick $M$ to be the optimal solution $S^*$ in the following proof of Theorem~\ref{thm:fwd}.

\begin{proof}[Proof of Theorem~\ref{thm:fwd}]
Let $S^*=\{s^*_1,\ldots,s^*_N\}$, where the elements $s^*_t$ are ordered according to Lemma~\ref{lem:ord}. Let $S^*_t=\{s^*_1,\ldots,s^*_t\}$ for $t=1,\ldots,N$, and $S^*_0=\emptyset$. Using this definition, we obtain 
\begin{equation}\label{eq:lwb} f(S^*)-f(\emptyset) = \sum_{t=1}^N \rho_{s^*_t}(S^*_{t-1})\geq(1-\alpha) \sum_{t=1}^N \rho_{s^*_t}(\emptyset).
\end{equation}
The equality follows from a telescoping sum. The inequality follows from Definition~\ref{def:curvature}. On the other hand, a similar observation can also be made for the forward greedy solution as follows
\begin{equation*}
    \begin{split}
        f(S^\mathsf{f})-f(\emptyset) &= \sum_{t=1}^N \rho_t\\&=\sum_{t:s_t\in S^*} \rho_{s_t}(S^{t-1})+\sum_{t:s_t\notin S^*} \rho_{s_t}(S^{t-1}),
    \end{split}
\end{equation*} 
where the last equality decomposes the greedy steps into those that coincide with the optimal solution and those that do not. Invoking Lemma~\ref{lem:ord} for the term on the right-hand side, we obtain
\begin{equation}\label{eq:setref}
    f(S^\mathsf{f})-f(\emptyset)\leq \sum_{t:s_t\in S^*} \rho_{s_t}(S^{t-1})+\sum_{t:s_t^*\notin S^\mathsf{f}} \rho_{s_t^*}(S^{t-1}),
\end{equation}
where the term on the right also utilizes $\{t:s_t\notin S^*\}=\{t:s_t^*\notin S^\mathsf{f}\}$, which is a direct result of the last statement of  Lemma~\ref{lem:ord}.
Now, notice that for any $s_t^*\notin S^\mathsf{f}$ we have $s_t^*\notin S^{t-1}$. Hence using Definition~\ref{def:submodularityratio}, we obtain 
$$f(S^\mathsf{f})-f(\emptyset)\leq \sum_{t:s_t\in S^*} \rho_{s_t}(S^{t-1})+\dfrac{1}{\gamma  }\sum_{t:s_t^*\notin S^\mathsf{f}} \rho_{s_t^*}(\emptyset).$$
Next, the term on the left can also be upper bounded by utilizing Definition~\ref{def:submodularityratio}, giving us
\begin{equation}\label{eq:up}
    \begin{split}
    f(S^\mathsf{f})-f(\emptyset)&\leq \dfrac{1}{\gamma  }\sum_{t:s_t\in S^*} \rho_{s_t}(\emptyset)+\dfrac{1}{\gamma  }\sum_{t:s_t^*\notin S^\mathsf{f}} \rho_{s_t^*}(\emptyset)\\
    &=\dfrac{1}{\gamma}\sum_{t:s_t\in S^*} \rho_{s_t}(\emptyset)+\dfrac{1}{\gamma  }\sum_{t:s_t\notin S^*} \rho_{s_t^*}(\emptyset)\\
    &=\dfrac{1}{\gamma  }\sum_{t=1}^N \rho_{s^*_t}(\emptyset),
   \end{split}
\end{equation} 
The first equality reapplies the set reformulation $\{t:s_t\notin S^*\}=\{t:s_t^*\notin S^\mathsf{f}\}$ (previously found in \eqref{eq:setref}), and the second equality combines the two summations. Finally, we can combine \eqref{eq:lwb} and \eqref{eq:up}, to obtain 
\begin{equation*}
\frac{f(S^\mathsf{f})-f(\emptyset)}{f(S^*)-f(\emptyset)} \leq \dfrac{1}{\gamma(1-\alpha)}.
\end{equation*}
This completes the proof of theorem.\hfill\QEDA
\end{proof}

\cite[Theorem 2]{guo2019actuator} offers a guarantee for this setting. This is given by $ \frac{\gamma}{1-\gamma}\big((2N+1)^{\frac{1-\gamma}{\gamma(1-\alpha)}}-1\big).$ In contrast, the guarantee in Theorem~\ref{thm:fwd} above is independent of the problem size, and also tighter for any of the $(\alpha, \gamma, N)$ pairs.\footnote{\citep[Propositions 4 and 5]{guo2019actuator} prove that there is no performance guarantee for the forward greedy algorithm unless both submodular-like and supermodular-like properties are present in the objective function. Observe that this is confirmed by Theorem~\ref{thm:fwd}.}

As an alternative, \cite[Theorem 7]{sviridenko2017optimal} offers another guarantee, which is given by $1/(1-c)$, where the strong curvature $c$ quantifies how far a function is from being modular: the smallest parameter $c\in[0,\,1]$ such that $\rho_j(R) \ge (1-c)  \rho_j(S) ,$
for all $S,R \subseteq V\setminus j.$ This novel notion is a significantly stronger requirement than having both the submodularity ratio and the curvature, simultaneously~\citep{guo2019actuator}. Hence, it is not possible to compare it with our guarantee other than the case of a modular objective, that is, $c=0,\,\gamma=1,\,\alpha=0$. For both guarantees, modularity confirms the optimality of the forward greedy algorithm, as it is well-established by the Rado-Edmonds theorem~\citep{welsh1970matroid}. Note that computing the strong curvature notion requires an exhaustive enumeration of all inequalities in its definition, since the proof method of~\cite[Theorem 7]{sviridenko2017optimal}  does not allow any greedy curvature computation as we present below.

\begin{corollary}\label{cor:fwdgr}
Let $\gamma^\mathsf{fg}$ be the largest $\tilde\gamma$ that satisfies $\tilde\gamma\rho_{s}(S)\leq\rho_{s}(\emptyset)$ for all $S\in\mathcal{F}$, $|S|\leq N-1$ and $S\cup s \in \mathcal{F}$.
Then, $\gamma^{\mathsf{fg}}$ is called the \textit{forward greedy submodularity ratio} with $\gamma^{\mathsf{fg}}\geq \gamma$. Let $\alpha^\mathsf{fg}$ be the smallest $\tilde\alpha$ that satisfies $\rho_{s}(S)\geq(1-\tilde\alpha)\rho_{s}(\emptyset)$ for all $S\in\mathcal{F}$, $|S|\leq N-1$ and $S\cup s \in \mathcal{F}$.
Then, $\alpha^{\mathsf{fg}}$ is called the \textit{forward greedy curvature} with $\alpha^{\mathsf{fg}}\leq \alpha$. The performance guarantee can then be written as 
\begin{equation*}\label{eq:corbound}
\begin{split}
&\frac{f(S^\mathsf{f})-f(\emptyset)}{f(S^*)-f(\emptyset)} \leq \dfrac{1}{\gamma^{\mathsf{fg}}(1-\alpha^{\mathsf{fg}})},\, \text{or equivalently},\\
&f(S^\mathsf{f})\leq\dfrac{1}{\gamma^{\mathsf{fg}}(1-\alpha^{\mathsf{fg}})}f(S^*)+\Bigg(1-\dfrac{1}{\gamma^{\mathsf{fg}}(1-\alpha^{\mathsf{fg}})}\Bigg)f(\emptyset).
\end{split}
\end{equation*}
\end{corollary}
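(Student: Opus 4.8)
The plan is to revisit the proof of Theorem~\ref{thm:fwd} and observe that the only places where the submodularity ratio $\gamma$ and the curvature $\alpha$ were used are precisely the inequalities that compare a discrete derivative $\rho_{s}(S^{t-1})$ taken at a partial greedy iterate against the same derivative $\rho_{s}(\emptyset)$ taken at the empty set. So first I would isolate those invocations: in \eqref{eq:lwb}, Definition~\ref{def:curvature} is applied with $S=\emptyset$ and $R=S^*_{t-1}$, both of which are subsets of the optimal base $S^*\in\mathcal{F}$, with $s^*_t\cup S^*_{t-1}\subseteq S^*\in\mathcal{F}$; and in the chain leading to \eqref{eq:up}, Definition~\ref{def:submodularityratio} is applied with $S=\emptyset$ and $R=S^{t-1}$, where $S^{t-1}$ is a partial forward greedy iterate, hence lies in $\mathcal{F}$ and has cardinality $t-1\le N-1$, and where the relevant element (either $s_t$ with $s_t\cup S^{t-1}=S^t\in\mathcal{F}$, or $s^*_t$ with $s^*_t\notin S^{t-1}$) again satisfies the feasibility condition $S^{t-1}\cup\{\cdot\}\in\mathcal{F}$ because it belongs to a base and $S^{t-1}$ is contained in a base.

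Next I would verify that every such $(S,s)$ pair falls within the restricted quantifier range used to define $\gamma^{\mathsf{fg}}$ and $\alpha^{\mathsf{fg}}$, namely $S\in\mathcal{F}$, $|S|\le N-1$, and $S\cup s\in\mathcal{F}$. The empty set is independent and has cardinality $0\le N-1$; partial greedy iterates $S^{t-1}$ and optimal prefixes $S^*_{t-1}$ are both independent with cardinality at most $N-1$; and feasibility of $S\cup s$ holds in each case, as noted above. Therefore $\gamma^{\mathsf{fg}}\rho_s(S)\le\rho_s(\emptyset)$ and $\rho_s(S)\ge(1-\alpha^{\mathsf{fg}})\rho_s(\emptyset)$ hold for exactly the inequalities needed in \eqref{eq:lwb} and \eqref{eq:up}. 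Re-running the telescoping argument of the proof of Theorem~\ref{thm:fwd} verbatim, but substituting $\gamma^{\mathsf{fg}}$ for $\gamma$ and $\alpha^{\mathsf{fg}}$ for $\alpha$, yields the first displayed inequality of the corollary. The comparison $\gamma^{\mathsf{fg}}\ge\gamma$ and $\alpha^{\mathsf{fg}}\le\alpha$ is immediate: the constraints defining $\gamma^{\mathsf{fg}}$ (resp.\ $\alpha^{\mathsf{fg}}$) are a subset of those defining $\gamma$ (resp.\ $\alpha$), so relaxing the quantifier can only enlarge the feasible $\tilde\gamma$ and shrink the feasible $\tilde\alpha$; consequently $1/(\gamma^{\mathsf{fg}}(1-\alpha^{\mathsf{fg}}))\le 1/(\gamma(1-\alpha))$, so the greedy-notion bound is at least as tight.

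Finally, for the second displayed form I would simply rearrange $\frac{f(S^\mathsf{f})-f(\emptyset)}{f(S^*)-f(\emptyset)}\le\frac{1}{\gamma^{\mathsf{fg}}(1-\alpha^{\mathsf{fg}})}$; since $f$ is increasing and $S^*$ is a base while $\emptyset\subseteq S^*$, the denominator $f(S^*)-f(\emptyset)$ is nonnegative, so multiplying through and collecting the $f(\emptyset)$ terms gives $f(S^\mathsf{f})\le\frac{1}{\gamma^{\mathsf{fg}}(1-\alpha^{\mathsf{fg}})}f(S^*)+\bigl(1-\frac{1}{\gamma^{\mathsf{fg}}(1-\alpha^{\mathsf{fg}})}\bigr)f(\emptyset)$. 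The main obstacle, such as it is, is purely bookkeeping: one must check carefully that \emph{no} application of Definition~\ref{def:submodularityratio} or Definition~\ref{def:curvature} in the original proof ever evaluates a derivative at a set that is either infeasible, of cardinality exceeding $N-1$, or paired with an element violating the feasibility condition — otherwise the restricted greedy ratios would not suffice. I would handle this by walking through \eqref{eq:lwb}, \eqref{eq:setref}, \eqref{eq:up} line by line and annotating the $(S,s)$ pair at each step; the potential subtlety is the term $\rho_{s_t^*}(S^{t-1})$ with $s_t^*\notin S^\mathsf{f}$, where one needs that $S^{t-1}\cup\{s_t^*\}$ is independent, which follows because $s_t^*\notin S^{t-1}$, $S^{t-1}$ is independent of cardinality $t-1$, and one may extend via matroid property~\textit{(iii)} — but in fact Definition~\ref{def:submodularityratio} as restated for $\gamma^{\mathsf{fg}}$ only requires $S\cup s\in\mathcal{F}$, and here the relevant derivative at $S^{t-1}$ has already been bounded by one at $\emptyset$ where feasibility is trivial; a clean way to sidestep the issue entirely is to note that in \eqref{eq:up} every surviving derivative is evaluated at $\emptyset$, so the only genuinely restricted invocations are those already covered.
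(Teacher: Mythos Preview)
Your approach is correct and is exactly the intended one: the paper states the corollary without proof, expecting the reader to re-trace the argument of Theorem~\ref{thm:fwd} and observe that every invocation of Definitions~\ref{def:submodularityratio} and~\ref{def:curvature} there compares a derivative at some $S\in\mathcal{F}$ with $|S|\le N-1$ and $S\cup s\in\mathcal{F}$ against the derivative at~$\emptyset$, so that the restricted ratios $\gamma^{\mathsf{fg}},\alpha^{\mathsf{fg}}$ suffice.

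One clarification on the subtlety you flag. Your first justification that $S^{t-1}\cup s_t^*\in\mathcal{F}$ ``via matroid property~\textit{(iii)}'' is not valid as stated: property~\textit{(iii)} only guarantees that \emph{some} element of a larger independent set can be added to $S^{t-1}$, not that the particular element $s_t^*$ can. Your proposed ``sidestep'' does not close the gap either, since the inequality $\gamma^{\mathsf{fg}}\rho_{s_t^*}(S^{t-1})\le\rho_{s_t^*}(\emptyset)$ must be invoked with $S=S^{t-1}$, so the hypothesis $S^{t-1}\cup s_t^*\in\mathcal{F}$ is genuinely required at that step. The clean reason is already contained in the proof of Lemma~\ref{lem:ord}: when $s_t\in\tilde M_t$ one sets $m_t=s_t$, so $S^{t-1}\cup m_t=S^t\in\mathcal{F}$; and when $s_t\notin\tilde M_t$, the element $m_t$ is \emph{explicitly chosen} so that $m_t\cup S^{t-1}\in\mathcal{F}$. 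Hence the ordering produced by Lemma~\ref{lem:ord} already guarantees $S^{t-1}\cup s_t^*\in\mathcal{F}$ for every $t$, and no separate feasibility argument is needed.
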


The forward greedy submodularity ratio and the forward greedy curvature can be obtained after analyzing $\mathcal{O}(\textstyle\binom{|V|}{N})$ inequalities, which could still be large. However, they are significantly more tractable than the original definitions. Since $\gamma^{\mathsf{fg}}\geq \gamma$ and $\alpha^{\mathsf{fg}}\leq \alpha$, the performance guarantee in Corollary~\ref{cor:fwdgr} can essentially be better than the one in Theorem~\ref{thm:fwd}. Notice that $(\gamma^{\mathsf{fg}},\alpha^\mathsf{fg})$ changes with the constraint set of the problem since the inequalities defining $(\gamma^{\mathsf{fg}},\alpha^\mathsf{fg})$ would then be different. In contrast, submodularity ratio and curvature depend only on the objective function.
 
The performance guarantee in Corollary~\ref{cor:fwdgr} can still be loose, because of the reference value~$f(\emptyset).$ For instance, in multi-robot task allocation problems, $f(\emptyset)$ corresponds to minus the safety of a plan with no tasks, see~\citep{daniel2021multi}. In such applications, the values $f(\emptyset)\approx -1$ and $(1-1/[{\gamma^{\mathsf{fg}}(1-\alpha^{\mathsf{fg}})}])< 0$ can make the bound in~\eqref{eq:corbound} large.
 
In the next section, we consider a variant of the greedy algorithm that comes along with a performance guarantee that does not depend on~$f(\emptyset).$

\subsection{Reverse greedy algorithm and the performance guarantee}\label{sec:rwg}
For Algorithm~\ref{alg:rev}, the following definitions and explanations are in order. For compactness, we define a \textit{shifted} discrete derivative $\delta_j(S):=\rho_j(S\setminus j)=f(S)-f(S\setminus j),$ for all $S\subseteq V$, $j\in S$. At iteration $t$, the reverse greedy algorithm chooses $r_t := X^{t-1}\setminus X^t$ to remove, with the maximal reduction  $\delta_{t} := f(X^{t-1})-f(X^{t})$. 
In Line 5, we have a matroid feasibility check. In contrast to Algorithm~\ref{alg:fwd}, this matroid feasibility check requires that our intermediate solutions are supersets of a base of the matroid $\mathcal{M}.$  
The set $Y^t$ denotes the set of elements having been considered by the matroid feasibility check before choosing $x_{t+1}$. The final reverse greedy solution is $S^\mathsf{r} := X^{|V|-N} $, and it is a base of  $(V,\mathcal{F})$, since it lies in $\mathcal{F}$ and has cardinality $N$ by the properties of a matroid.

\begin{algorithm}[t]
        \caption{Reverse Greedy Algorithm}
        \label{alg:rev}
        \begin{algorithmic}[1]
        \Require Set function $f$, ground set $V$, matroid $(V,\mathcal{F})$, cardinality constraint $N$
        \Ensure Reverse greedy solution $S^\mathsf{r}$
        \Function {ReverseGreedy}{$f, V,{\mathcal{F}}, N$}
        
        \State {$X^0 = V$, $Y^0 = \emptyset$, $t=1$}
        \While{$|X^{t-1}|>N$}
            \State ${k^*(t)}= \arg\max_{j\in V\setminus Y^{t-1}}\delta_j(X^{t-1})$
            \If {$\exists M\in\mathcal{F}$ such that $M\subseteq\left\{X^{t-1}\setminus k^*(t)\right\}$ and $|M|=N$}
        \State $Y^{t-1}\gets Y^{t-1}\cup k^*(t)$
        \Else
        \State $\delta_{t} \gets\delta_{k^*(t)}(X^{t-1})$ and $r_t = k^*(t)$
        \State $X^{t} \gets X^{t-1}\setminus k^*(t)$ and $Y^{t}\gets Y^{t-1}\cup k^*(t)$
        \State $t\gets t+1$
    \EndIf    
            \EndWhile
            \State $S^\mathsf{r} \gets X^{|V|-N} $
        \EndFunction
        \end{algorithmic}
    \end{algorithm}
    
    Main result is shown in the following theorem.
    
\begin{theorem}
\label{thm:rev}
If Algorithm~\ref{alg:rev} is applied to \eqref{eq:generalized}, then
\begin{equation*}
\label{eq:finalBound2}
\frac{f(V)-f(S^\mathsf{r})}{f(V)-f(S^*)} \geq \dfrac{1-\alpha}{1+(1-\gamma)(1-\alpha)}.
\end{equation*}
\end{theorem}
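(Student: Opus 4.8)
\textbf{Proof strategy for Theorem~\ref{thm:rev}.} The plan is to mirror the forward‑greedy argument but carry it out on the dual matroid, exploiting Proposition~\ref{prop:rev}. Concretely, I would work with $\hat f(S) = -f(V\setminus S)$, which is increasing with submodularity ratio $\hat\gamma = 1-\alpha$ and curvature $\hat\alpha = 1-\gamma$, and with the dual matroid $\hat{\mathcal M} = (V,\hat{\mathcal F})$, whose bases are exactly the complements $V\setminus M$ of bases $M$ of $\mathcal M$. The key observation is that the reverse greedy run of Algorithm~\ref{alg:rev} on $f$ over $\mathcal M$ is, step for step, a forward greedy run on $\hat f$ over $\hat{\mathcal M}$: removing $r_t$ from $X^{t-1}$ with reduction $\delta_t = f(X^{t-1}) - f(X^t)$ corresponds to adding $r_t$ to the complementary set $\hat X^{t-1} := V\setminus X^{t-1}$ with discrete derivative $\hat\rho_{r_t}(\hat X^{t-1}) = \hat f(\hat X^{t-1}\cup r_t) - \hat f(\hat X^{t-1}) = f(X^{t-1}) - f(X^t) = \delta_t$, and the matroid feasibility check "$\exists M\in\mathcal F,\ M\subseteq X^{t-1}\setminus k$, $|M|=N$" is precisely the check "$\hat X^{t-1}\cup k \in \hat{\mathcal F}$". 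Since $\arg\max$ of $\delta_j$ equals $\arg\min$ of $-\delta_j$, greedily choosing the largest reduction for $f$ is greedily choosing the smallest discrete derivative for $\hat f$. So $S^\mathsf{r}$ corresponds to the complement of the forward‑greedy base $\hat S^\mathsf{f}$ for $\hat f$ on $\hat{\mathcal M}$, and the optimal solution $S^*$ of~\eqref{eq:generalized} corresponds to a feasible base $V\setminus S^*$ of $\hat{\mathcal M}$ (not necessarily optimal for $\hat f$, but that is fine — Lemma~\ref{lem:ord} only needs a base, and we will instantiate it with $M = V\setminus S^*$).

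\textbf{Key steps.} First I would set up the dictionary above carefully: verify $\hat X^{t} = \hat X^{t-1}\cup r_t$, that $\hat X^0 = \emptyset$, that the stopping condition $|X^{t-1}| > N$ becomes $|\hat X^{t-1}| < |V| - N$, and that $V\setminus S^\mathsf{r} = \hat X^{|V|-N}$ is a base of $\hat{\mathcal M}$. Next I would invoke Lemma~\ref{lem:ord} in the dual: applied to $\hat f$, $\hat{\mathcal M}$, and the base $M = V\setminus S^*$, it orders the elements of $V\setminus S^* = \{n_1,\dots,n_{|V|-N}\}$ so that $\hat\rho_{n_t}(\hat X^{t-1}) \geq \hat\rho_{t}$ for each $t$, with $n_t = r_t$ whenever $r_t \in V\setminus S^*$. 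Then I would run the forward‑greedy computation exactly as in the proof of Theorem~\ref{thm:fwd}, but with $(\hat f, \hat\gamma, \hat\alpha)$ in place of $(f,\gamma,\alpha)$: a telescoping lower bound $\hat f(V\setminus S^*) - \hat f(\emptyset) \geq (1-\hat\alpha)\sum_t \hat\rho_{n_t}(\emptyset) = \gamma \sum_t \hat\rho_{n_t}(\emptyset)$, and an upper bound $\hat f(V\setminus S^\mathsf{r}) - \hat f(\emptyset) \leq \tfrac{1}{\hat\gamma}\sum_t \hat\rho_{n_t}(\emptyset) = \tfrac{1}{1-\alpha}\sum_t \hat\rho_{n_t}(\emptyset)$, using the split into steps coinciding with $V\setminus S^*$ and those that do not, exactly as in~\eqref{eq:setref}–\eqref{eq:up}. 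Combining gives
\begin{equation*}
\frac{\hat f(V\setminus S^\mathsf{r}) - \hat f(\emptyset)}{\hat f(V\setminus S^*) - \hat f(\emptyset)} \leq \frac{1}{\gamma(1-\alpha)}.
\end{equation*}
Finally I would translate back via $\hat f(V\setminus S) = -f(S)$ and $\hat f(\emptyset) = -f(V)$: the numerator is $f(V) - f(S^\mathsf{r})$ and the denominator is $f(V) - f(S^*)$, both nonnegative since $f$ is increasing, which — wait, this yields $1/[\gamma(1-\alpha)]$, not the claimed $[1+(1-\gamma)(1-\alpha)]/(1-\alpha)$. The point is that the direct dualization is too lossy; one must instead bound the "unmatched'' greedy mass $\sum_{t: r_t\notin V\setminus S^*}\hat\rho_{r_t}(\hat X^{t-1})$ more sharply by re‑expressing these terms against $\hat f$ evaluated at a \emph{superset} rather than at $\emptyset$. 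Specifically, for each such $t$ one uses that $\hat\rho_{r_t}(\hat X^{t-1}) \leq \tfrac{1}{\hat\gamma}\hat\rho_{r_t}(\hat S^{t-1}\text{-type reference})$ is not available, so instead one keeps $\sum_t \hat\rho_t = \hat f(V\setminus S^\mathsf{r}) - \hat f(\emptyset)$ intact on the left, and on the right bounds $\sum_{t:n_t\notin \text{greedy}}\hat\rho_{n_t}(\hat X^{t-1})$ both by $\hat f(V\setminus S^*)-\hat f(\emptyset)$ scaled (curvature, losing $(1-\hat\alpha)$) \emph{and} by telescoping the greedy reductions, then solves the resulting self‑referential inequality for $\hat f(V\setminus S^\mathsf r)-\hat f(\emptyset)$. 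Carrying the algebra through with $\hat\alpha = 1-\gamma$ produces the factor $1/[1 + (1-\gamma)(1-\alpha)]$ before the $(1-\alpha)$, which after inverting gives the stated lower bound $\tfrac{1-\alpha}{1+(1-\gamma)(1-\alpha)}$.

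\textbf{Main obstacle.} The hard part is exactly this last point: the crude dual of Theorem~\ref{thm:fwd} gives only $1/[\gamma(1-\alpha)]$, which does \emph{not} match the target (indeed for $\gamma=1$ both are $1/(1-\alpha)$, but for $\gamma<1$ the target is strictly better), so a naive "run Theorem~\ref{thm:fwd} on the dual'' does not suffice. The improvement must come from not discarding information when the greedy reduction $\delta_t$ exceeds the matched optimal reduction: in the forward proof those surplus terms are simply absorbed into $\tfrac1\gamma\sum\rho_{s_t}(\emptyset)$, but here one should instead keep the telescoped identity $\sum_t\delta_t = f(V) - f(S^\mathsf r)$ on one side and set up a two‑sided estimate on $\sum_{t: r_t\notin V\setminus S^*}\delta_t$ — bounding it above by the unmatched \emph{optimal} mass (via submodularity ratio of $f$, i.e.\ curvature $\hat\alpha$ of $\hat f$) and recognizing that the matched mass plus this unmatched optimal mass reconstructs $f(V)-f(S^*)$ up to the curvature factor. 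Balancing these two views yields the self‑referential inequality whose solution is the claimed bound; getting the bookkeeping of which reference set ($\emptyset$, $\hat X^{t-1}$, or $V\setminus S^*$) each discrete derivative is attached to — and invoking Definition~\ref{def:submodularityratio} vs.\ Definition~\ref{def:curvature} in the correct direction each time — is where the care is needed.
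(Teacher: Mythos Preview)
Your reduction to the dual carries a sign error that flips the whole argument. You correctly compute $\hat\rho_{r_t}(\hat X^{t-1}) = \delta_t$, but then assert that ``greedily choosing the largest reduction for $f$ is greedily choosing the smallest discrete derivative for $\hat f$.'' Since $\hat\rho_j = +\delta_j$ (not $-\delta_j$), maximising $\delta_j$ is maximising $\hat\rho_j$: the reverse greedy on $f$ over $\mathcal M$ is a forward greedy \emph{maximisation} of $\hat f$ over $\hat{\mathcal M}$, not a minimisation. Hence Lemma~\ref{lem:ord} does not give $\hat\rho_{n_t}(\hat X^{t-1}) \ge \hat\rho_t$; the ordering lemma for greedy maximisation reverses the inequality to $\hat\rho_{m_t}(\hat X^{t-1}) \le \hat\rho_t$ (this is the paper's Lemma~\ref{lem:ord2}). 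As a consequence, your ``naive'' computation produces an \emph{upper} bound on $[\hat f(V\setminus S^{\mathsf r})-\hat f(\emptyset)]/[\hat f(V\setminus S^*)-\hat f(\emptyset)]$, whereas Theorem~\ref{thm:rev} asserts a \emph{lower} bound on exactly this ratio. So the mismatch you notice is not that the constant is loose; the inequality points the wrong way and is vacuous.

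With the direction corrected, a self-referential inequality is indeed the mechanism, but your sketch does not isolate it. The paper pivots on the single quantity $\hat f(R^{\hat N}\cup R^*)-\hat f(\emptyset)$. Telescoping it as ``greedy set, then add $R^*$'' and applying the submodularity ratio $\hat\gamma = 1-\alpha$ together with Lemma~\ref{lem:ord2} gives an upper bound of the form $[\hat f(R^{\hat N})-\hat f(\emptyset)] + \tfrac{1}{1-\alpha}\sum_{t:r^*_t\notin R^{\hat N}}\hat\rho_{r_t}(R^{t-1})$; telescoping it as ``$R^*$, then add the greedy set'' and applying the curvature $\hat\alpha = 1-\gamma$ gives a lower bound of the form $[\hat f(R^*)-\hat f(\emptyset)] + \gamma[\hat f(R^{\hat N})-\hat f(\emptyset)] - \gamma\sum_{t:r^*_t\in R^{\hat N}}\hat\rho_{r_t}(R^{t-1})$. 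Combining the two and using $\tfrac{1}{1-\alpha}\ge\gamma$ to merge the two partial sums back into the full telescoping sum $\sum_t\hat\rho_t = \hat f(R^{\hat N})-\hat f(\emptyset)$ yields a linear inequality in $\hat f(R^{\hat N})-\hat f(\emptyset)$ whose solution is the stated bound. Your ``two-sided estimate on $\sum_{t:r_t\notin V\setminus S^*}\delta_t$'' with reference sets floating between $\emptyset$, $\hat X^{t-1}$ and $V\setminus S^*$ does not recover this pivot, and without it the bookkeeping you describe will not close.
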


For the sake of clarity of the notation, our proof will utilize the following reformulation of~\eqref{eq:generalized}: 
\begin{equation}
\begin{aligned}
& \underset{R\subseteq V}{\max}\
 \hat{f}(R):=-f(V\setminus R),\;\text{increasing,}\;\text{$(1-\alpha)$-submodular},\;\text{$(1-\gamma)$-supermodular} \\
&\ \mathrm{s.t.}\ R\in \hat{\mathcal{F}}\text{,\, $\hat{\mathcal{M}}=(V,\mathcal{F})$ is a matroid,}\, |R|= |V|-N=\hat{N}, 
\end{aligned}
\label{eq:generalizedref}
\end{equation}where the cardinality of any base of $\hat{\mathcal{M}}$ is given by $|V|-N=\hat{N}\in\Z_+$.
The equivalence of~\eqref{eq:generalized} and~\eqref{eq:generalizedref} follows directly from Proposition~\ref{prop:rev} and Definition~\ref{def:dual of a matroid} (that is, the definition of the dual matroid). Denote its optimal solution by~$R^*$. Clearly, we have $R^*=V\setminus S^*$.

Forward greedy algorithm applied to~\eqref{eq:generalizedref} is presented in Algorithm~\ref{alg:fwdrev}, where we define $\hat{\rho}_j(R) = \hat{f}(R\cup j)-\hat{f}(R)$, for all $R \subseteq V$ and $j\in V$. Iterations of this algorithm coincide with those of the reverse greedy algorithm applied to \eqref{eq:generalized}. Denote the forward greedy iterates by $R^{t}.$ At any iteration, we have $X^{t}=V\setminus R^{t}$. At iteration $t$, the forward greedy algorithm chooses $r_t := R^t\setminus R^{t-1}$ with the corresponding discrete derivative $\hat{\rho}_{t} := \hat{f}(S^{t})-\hat{f}(S^{t-1})$.

\begin{algorithm}[t]
        \caption{Forward Greedy Reformulation of Reverse Greedy Algorithm}
        \label{alg:fwdrev}
        \begin{algorithmic}[1]
        \Require Set function $\hat{f}$, ground set $V$, dual matroid $(V,\hat{\mathcal{F}})$, cardinality constraint $\hat{N}$
        \Ensure Reverse greedy solution $S^\mathsf{r}$
        \Function {ReverseGreedyReformulated}{$f, V,\hat{\mathcal{F}}, \hat{N}$}
        
        \State {$R^0 = \emptyset$, $Y^0 = \emptyset$, $t=1$}
        \While{$|R^{t-1}|<{\hat{N}}$}
            \State ${k^*(t)}= \arg\max_{j\in V\setminus Y^{t-1}}\hat{\rho}_j(R^{t-1})$
            \If {$R^{t-1}\cup k^*(t)\notin\hat{{\mathcal{F}}}$}
        \State $Y^{t-1}\gets Y^{t-1}\cup k^*(t)$
        \Else
        \State $\hat{\rho}_{t} \gets\hat{\rho}_{k^*(t)}(R^{t-1})$ and $r_t = k^*(t)$
        \State $R^{t} \gets R^{t-1}\cup k^*(t)$ and $Y^{t}\gets Y^{t-1}\cup k^*(t)$
        \State $t\gets t+1$
    \EndIf    
            \EndWhile
            \State $S^\mathsf{r} \gets V\setminus R^{\hat{N}} $
        \EndFunction
        \end{algorithmic}
    \end{algorithm}
    
    With the observations above in mind, we bring the ordering lemma.
    \begin{lemma}\label{lem:ord2}
    For any base $M\in \hat{\mathcal{F}}$, the elements of $M=\{m_1,\ldots,m_{\hat{N}}\}$ can be ordered so that $$\hat{\rho}_{m_t}(R^{t-1})\leq\hat{\rho}_{t}=\hat{\rho}_{r_t}(R^{t-1}),$$ holds for $t=1,\ldots,\hat{N}.$  Moreover, whenever $r_t\in M$, we have that $m_t=r_t$.
    \end{lemma}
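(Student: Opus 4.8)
The plan is to mirror the proof of Lemma~\ref{lem:ord} almost verbatim, since Lemma~\ref{lem:ord2} is simply that statement applied to the forward greedy algorithm (Algorithm~\ref{alg:fwdrev}) running on the reformulated problem~\eqref{eq:generalizedref} over the dual matroid $\hat{\mathcal{M}}$. The one structural difference is that the greedy step here \emph{maximizes} the discrete derivative $\hat{\rho}_j(R^{t-1})$ rather than minimizing it, so every inequality flips: the element chosen by greedy has the \emph{largest} discrete derivative among the still-feasible candidates, hence any alternative feasible element $j$ satisfies $\hat{\rho}_j(R^{t-1}) \le \hat{\rho}_{r_t}(R^{t-1})$, which is exactly the desired $\hat{\rho}_{m_t}(R^{t-1}) \le \hat{\rho}_t$.

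Concretely, I would argue by downward induction on $t$, constructing the labels $m_{\hat N}, m_{\hat N - 1}, \dots, m_1$ in that order. Suppose $m_{t+1},\dots,m_{\hat N}$ have been assigned; set $\tilde M_t = M \setminus \{m_{t+1},\dots,m_{\hat N}\}$, which is independent in $\hat{\mathcal{F}}$ by the hereditary axiom (ii) of Definition~\ref{def:matroid} and has cardinality $t$. Two cases: if $r_t \in \tilde M_t$, put $m_t = r_t$; the inequality holds trivially with equality and the second claim of the lemma is satisfied. If $r_t \notin \tilde M_t$, then since $|\tilde M_t| = t > t-1 = |R^{t-1}|$ and both lie in $\hat{\mathcal{F}}$, the exchange axiom (iii) furnishes some $j \in \tilde M_t \setminus R^{t-1}$ with $R^{t-1} \cup j \in \hat{\mathcal{F}}$; this $j$ was a feasible candidate at iteration $t$ but was not selected, so $\hat{\rho}_j(R^{t-1}) \le \hat{\rho}_{r_t}(R^{t-1})$, and we set $m_t = j$. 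The base case (existence of $m_{\hat N}$, i.e.\ some $m_{\hat N} \in M \setminus R^{\hat N - 1}$ with $R^{\hat N - 1} \cup m_{\hat N} \in \hat{\mathcal{F}}$) again follows from axiom (iii) applied to $M$ and $R^{\hat N - 1}$.

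One point that deserves a line of care, and which I expect to be the only real subtlety, is the use of the rejected-elements set $Y^{t-1}$: the greedy step takes the argmax over $V \setminus Y^{t-1}$, not over all of $V$, so I must confirm that the exchange element $j$ produced above indeed belongs to $V \setminus Y^{t-1}$, i.e.\ was genuinely among the candidates greedy compared against. This is guaranteed by the matroid property already invoked in the body of the paper for Algorithm~\ref{alg:fwd}: once an element is rejected by the feasibility check it can never become feasible again (if $R^{t-1} \cup j \in \hat{\mathcal{F}}$ then certainly $R^{s} \cup j \in \hat{\mathcal{F}}$ for every earlier $s$ by axiom (ii) together with monotonicity of the iterates, so $j$ cannot have been placed in any $Y$ before step $t$). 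Hence $j \notin Y^{t-1}$, and the comparison $\hat{\rho}_j(R^{t-1}) \le \hat{\rho}_{r_t}(R^{t-1})$ is legitimate. Everything else is a routine transcription of the earlier argument with the direction of the inequality reversed, so the proof is short.
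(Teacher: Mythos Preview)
Your proposal is correct and follows essentially the same argument as the paper's own proof: a downward induction defining $\tilde M_t = M \setminus \{m_{t+1},\dots,m_{\hat N}\}$, setting $m_t = r_t$ when $r_t \in \tilde M_t$, and otherwise invoking the exchange axiom~(iii) on $\tilde M_t$ and $R^{t-1}$ to produce a feasible $j$ that the maximizing greedy did not pick. Your extra paragraph verifying $j \notin Y^{t-1}$ is a nice point of rigor that the paper's proof leaves implicit.
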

    \begin{proof} The proof is a reformulation of that of~Lemma~\ref{lem:ord}, by changing the greedy minimization to a greedy maximization. We prove by induction. Assume the elements $\{m_{t+1},\ldots, m_{\hat{N}} \}$ are found. Let $\tilde{M}_t=M\setminus \{m_{t+1},\ldots, m_{\hat{N}} \}$.
If $r_t\in \tilde{M}_t$, we let $m_t=r_t,$ and the condition is satisfied, and the second statement of the lemma holds. If $r_t\not\in \tilde{M}_t$, by property~\textit{(iii)} of Definition~\ref{def:matroid} and $\tilde{M}_t\in\hat{\mathcal{F}}$, we know that there exists $j \in \tilde{M}_t\setminus R^{t-1}$ such that $j\cup R^{t-1}\in \mathcal{F}$. Moreover, $\hat{\rho}_{j}(R^{t-1})\leq\hat{\rho}_{r_t}(R^{t-1})$, since $j$ is not the element chosen by the greedy algorithm. Hence, we pick $m_t=j.$ The existence of $m_{\hat{N}}$ follows from property~\textit{(iii)} of Definition~\ref{def:matroid}: there exists $m_{\hat{N}}\in M\setminus R^{{\hat{N}}-1}$ such that $m_{\hat{N}}\cup R^{{\hat{N}}-1}\in \hat{\mathcal{F}}$. This concludes the proof. \hfill\QEDA
    \end{proof}
    
    We are now ready to prove our theorem.
    
    \begin{proof}[Proof of Theorem~\ref{thm:rev}] Let $R^*=\{r^*_1,\ldots,r^*_{\hat{N}}\}$, where the elements $r^*_t$ are ordered according to Lemma~\ref{lem:ord2}. Let $R^*_t=\{r^*_1,\ldots,r^*_t\}$ for $t=1,\ldots,{\hat{N}}$, and $R^*_0=\emptyset$. Using this definition, we obtain 
    \begin{equation}\label{eq:upbrev}
    \begin{split}
  \hat{f}(R^{\hat{N}}\cup R^*)- \hat{f}(\emptyset)&= \hat{f}(R^{\hat{N}})- \hat{f}(\emptyset) + \sum_{t=1}^{\hat{N}} \hat{\rho}_{r^*_t}(R^{\hat{N}}\cup R^*_{t-1})\\
  &= \hat{f}(R^{\hat{N}})- \hat{f}(\emptyset) + \sum_{t:r^*_t\notin R^{\hat{N}}} \hat{\rho}_{r^*_t}(R^{\hat{N}} \cup R^*_{t-1})\\
  &\leq \hat{f}(R^{\hat{N}})- \hat{f}(\emptyset) + \dfrac{1}{1-\alpha} \sum_{t:r^*_t\notin R^{\hat{N}}} \hat{\rho}_{r^*_t}(R^{t-1})\\
  &\leq \hat{f}(R^{\hat{N}})- \hat{f}(\emptyset) + \dfrac{1}{1-\alpha} \sum_{t:r^*_t\notin R^{\hat{N}}} \hat{\rho}_{r_t}(R^{t-1}).
  \end{split}
  \end{equation}
  The first equality follows from a telescoping sum, whereas the second equality removes the zero-valued terms. The first inequality follows from the submodularity ratio of $\hat{f}$ (which is $(1-\alpha)$), whereas the second inequality follows from Lemma~\ref{lem:ord2}.
  
  A similar observation can be made to obtain a lower bound to the term $[\hat{f}(R^{\hat{N}}\cup R^*)- \hat{f}(\emptyset)]$ above as follows
    \begin{equation}\label{eq:lwbrev}
    \begin{split}
  \hat{f}(R^{\hat{N}}\cup R^*)- \hat{f}(\emptyset)&= \hat{f}(R^*)- \hat{f}(\emptyset) + \sum_{t=1}^{\hat{N}} \hat{\rho}_{r_t}(R^{t-1}\cup R^*)\\
  &\geq  \hat{f}(R^*)- \hat{f}(\emptyset) + \gamma\sum_{t=1}^{\hat{N}} \hat{\rho}_{r_t}(R^{t-1})- \gamma\sum_{t:r_t\in R^*} \hat{\rho}_{r_t}(R^{t-1})\\
  &=\hat{f}(R^*)- \hat{f}(\emptyset) + \gamma\left[\hat{f}(R^{\hat{N}})- \hat{f}(\emptyset)\right]- \gamma\sum_{t:r_t\in R^*} \hat{\rho}_{r_t}(R^{t-1})\\
  &=\hat{f}(R^*)- \hat{f}(\emptyset) + \gamma\left[\hat{f}(R^{\hat{N}})- \hat{f}(\emptyset)\right]- \gamma\sum_{t:r_t^*\in R^{\hat{N}}} \hat{\rho}_{r_t}(R^{t-1}).
  \end{split}
  \end{equation}
  The first equality follows from a telescoping sum. The inequality follows from the application of the curvature of $\hat{f}$ (which is $(1-\gamma)$) together with the observation that some of the terms in the sum $\sum_{t=1}^{\hat{N}} \hat{\rho}_{r_t}(R^{t-1}\cup R^*)$ are zero whenever $r_t\in R^*$. The second equality sums up all the terms in the telescoping sum, whereas the third equality applies $\{t:r_t\in R^*\}=\{t:r_t^*\in R^{\hat{N}}\}$, invoking the last statement of Lemma~\ref{lem:ord2}.
  
  Now, combining~\eqref{eq:lwbrev} and \eqref{eq:upbrev}, we obtain
    \begin{equation}\label{eq:revlast}
        \begin{split} (1-\gamma)\left[\hat{f}(R^{\hat{N}})- \hat{f}(\emptyset)\right] &\ge \hat{f}(R^*)- \hat{f}(\emptyset) - \dfrac{1}{1-\alpha} \sum_{t:r^*_t\notin R^{\hat{N}}} \hat{\rho}_{r_t}(R^{t-1}) -\gamma\sum_{t:r_t^*\in R^{\hat{N}}} \hat{\rho}_{r_t}(R^{t-1}) \\
        &\geq \hat{f}(R^*)- \hat{f}(\emptyset) - \dfrac{1}{1-\alpha} \sum_{t=1}^{\hat{N}} \hat{\rho}_{r_t}(R^{t-1})\\
        & = \hat{f}(R^*)- \hat{f}(\emptyset) - \dfrac{1}{1-\alpha}\left[\hat{f}(R^{\hat{N}})- \hat{f}(\emptyset)\right].
        \end{split}
    \end{equation}
    The first inequality presents  only the combination of ~\eqref{eq:lwbrev} and \eqref{eq:upbrev}. Observing that $\dfrac{1}{1-\alpha}\ge \gamma$ for any $(\alpha,\gamma)$ pair, the second inequality combines the two sums: $\{1,\ldots,{\hat{N}}\}=\{t:r^*_t\notin R^{\hat{N}}\}\cup \{t:r^*_t\in R^{\hat{N}}\}$. The last step sums all the terms involved in the telescoping sum.
    
    By reorganizing \eqref{eq:revlast}, we get 
    \begin{equation*} \frac{\hat{f}(R^{\hat{N}})-\hat{f}(\emptyset)}{\hat{f}(R^*)-\hat{f}(\emptyset)}\geq\dfrac{1-\alpha}{1+(1-\gamma)(1-\alpha)}.
\end{equation*}
    From the equivalence of the two problems and Algorithms~\ref{alg:rev} and~\ref{alg:fwdrev}, we complete the proof:
    \begin{equation*}
\frac{f(V)-f(S^\mathsf{r})}{f(V)-f(S^*)} = \frac{\hat{f}(R^{\hat{N}})-\hat{f}(\emptyset)}{\hat{f}(R^*)-\hat{f}(\emptyset)}\geq\dfrac{1-\alpha}{1+(1-\gamma)(1-\alpha)}.
\end{equation*}
\hfill\QEDA
    \end{proof}
    
    
    \citep[Theorem 6]{sviridenko2017optimal} offers a guarantee for this setting. This is given by $1-c$, where $c$ is again the strong curvature as in \citep[Theorem 7]{sviridenko2017optimal} discussed in Section~\ref{sec:fwdg}. Here, arguments similar to the case of the forward greedy algorithm can be made both on the strength of this requirement and its computational aspect. Observe that for the case of a modular objective, both guarantees confirm the optimality of the reverse greedy algorithm on the base of a matroid. 
    
    As an alternative, \citep[Theorem 1]{bian2017guarantees} offers another guarantee for this setting if the constraint is a uniform matroid, that is, only a cardinality constraint. This is given by $\dfrac{1}{1-\gamma}\left( 1- e^{-(1-\alpha)(1-\gamma)} \right)$. This guarantee is tighter than the one in~Theorem~\ref{thm:rev}, since it treats a specialized case. However, when we have exact submodularity $\gamma=1$, both guarantees still coincide $$\lim_{\gamma\rightarrow 1}\dfrac{1}{1-\gamma}\left( 1- e^{-(1-\alpha)(1-\gamma)} \right)= {1-\alpha}.$$ Moreover, both guarantees tend to $0$ as $\alpha\rightarrow 1$ independent of $\gamma$, in other words, when supermodular-like properties are not present at all. We highlight that their proof method is not applicable to general matroids. Finally, when $\gamma=0$ and $\alpha=0$, we recover the classical $\dfrac{1}{2}$ guarantee of \cite{fisher1978analysis}, since setting $\gamma=0$ can be considered to be the case when the submodularity property of the objective is completely unknown.
    
    \begin{corollary}\label{cor:revgr}
Let $\gamma^\mathsf{rg}$ be the largest $\tilde\gamma$ that satisfies $\tilde\gamma\hat{\rho}_{r_t}(R^{t-1})\leq \hat{\rho}_{r_t}(R^{t-1}\cup R)$ for all $t$, for all $R\subset V\setminus r_t$ and $\rvert R\rvert = \hat{N}.$\footnote{We remind the reader that $\hat{\rho}_j(R) = \hat{f}(R\cup j)-\hat{f}(R)={f}(V\setminus R)-{f}(\{V\setminus R\}\setminus j)$, for all $R \subseteq V$ and $j\in V$.}
Then, $\gamma^{\mathsf{rg}}$ is called the \textit{reverse greedy submodularity ratio} with $\gamma^{\mathsf{rg}}\geq \gamma$. Let $\alpha^\mathsf{rg}$ be the smallest $\tilde\alpha$ that satisfies $\hat{\rho}_{r}(R^{t-1})\geq (1-\tilde\alpha)\hat{\rho}_{r}(R^{\hat{N}}\cup R)$ for all $t$, for all $R\subset V$ and $\rvert R\rvert = t-1$, for all $r\notin R^{\hat{N}}\cup R.$ Then, $\alpha^{\mathsf{rg}}$ is called the \textit{reverse greedy curvature} with $\alpha^{\mathsf{rg}}\leq \alpha$. The performance guarantee can then be written as
    \begin{equation*}
    \begin{split}
&\dfrac{f(V)-f(S^\mathsf{r})}{f(V)-f(S^*)} \geq \dfrac{1-\alpha^{\mathsf{rg}}}{1+(1-\gamma^{\mathsf{rg}})(1-\alpha^{\mathsf{rg}})},\\
&f(S^\mathsf{r})\leq\dfrac{1-\alpha^{\mathsf{rg}}}{1+(1-\gamma^{\mathsf{rg}})(1-\alpha^{\mathsf{rg}})}f(S^*)+\Bigg(1-\dfrac{1-\alpha^{\mathsf{rg}}}{1+(1-\gamma^{\mathsf{rg}})(1-\alpha^{\mathsf{rg}})}\Bigg)f(V).
\end{split}
    \end{equation*}
    \end{corollary}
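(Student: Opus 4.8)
The plan is to obtain Corollary~\ref{cor:revgr} by localising the proof of Theorem~\ref{thm:rev} to the greedy trajectory. First I would scan that proof and mark every place where a global property of $\hat f$ is invoked; there are exactly two. The submodularity ratio of $\hat f$ (value $1-\alpha$) is used once, in the first inequality of~\eqref{eq:upbrev}, in the form $(1-\alpha)\,\hat\rho_{r^*_t}(R^{\hat N}\cup R^*_{t-1})\le\hat\rho_{r^*_t}(R^{t-1})$ for the indices $t$ with $r^*_t\notin R^{\hat N}$; and the curvature of $\hat f$ (value $1-\gamma$) is used once, in the single inequality of~\eqref{eq:lwbrev}, in the form $\gamma\,\hat\rho_{r_t}(R^{t-1})\le\hat\rho_{r_t}(R^{t-1}\cup R^*)$ for the indices $t$ with $r_t\notin R^*$ (the remaining terms of that sum vanish). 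Since $|R^*_{t-1}|=t-1$ and $|R^*|=\hat N$, the first family of inequalities is contained in the family defining $\alpha^{\mathsf{rg}}$ (take $R=R^*_{t-1}$, $r=r^*_t$), and the second in the family defining $\gamma^{\mathsf{rg}}$ (take $R=R^*$). Consequently, replacing $1-\alpha$ by $1-\alpha^{\mathsf{rg}}$ and $\gamma$ by $\gamma^{\mathsf{rg}}$ leaves every inequality of~\eqref{eq:upbrev}--\eqref{eq:revlast} valid, while the telescoping identities, the bookkeeping identity $\{t:r_t\in R^*\}=\{t:r^*_t\in R^{\hat N}\}$ furnished by Lemma~\ref{lem:ord2}, and the algebra of~\eqref{eq:revlast} are parameter-free and transcribe verbatim. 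One then lands on $\tfrac{\hat f(R^{\hat N})-\hat f(\emptyset)}{\hat f(R^*)-\hat f(\emptyset)}\ge\tfrac{1-\alpha^{\mathsf{rg}}}{1+(1-\gamma^{\mathsf{rg}})(1-\alpha^{\mathsf{rg}})}$, and the asserted bound follows via $\hat f(R^{\hat N})-\hat f(\emptyset)=f(V)-f(S^{\mathsf r})$ and $\hat f(R^*)-\hat f(\emptyset)=f(V)-f(S^*)$.

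Second, I would establish the comparisons $\gamma^{\mathsf{rg}}\ge\gamma$ and $\alpha^{\mathsf{rg}}\le\alpha$. Each inequality $\tilde\gamma\,\hat\rho_{r_t}(R^{t-1})\le\hat\rho_{r_t}(R^{t-1}\cup R)$ defining $\gamma^{\mathsf{rg}}$ is an instance of the curvature condition of $\hat f$ at the nested pair $R^{t-1}\subseteq R^{t-1}\cup R$ with element $r_t\notin R^{t-1}\cup R$; since $\tilde\gamma=\gamma=1-\hat\alpha$ satisfies every curvature condition of $\hat f$ (Proposition~\ref{prop:rev} and Definition~\ref{def:curvature}), the largest feasible $\tilde\gamma$, namely $\gamma^{\mathsf{rg}}$, is at least $\gamma$. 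Symmetrically, each inequality $\hat\rho_r(R^{t-1})\ge(1-\tilde\alpha)\,\hat\rho_r(R^{\hat N}\cup R)$ defining $\alpha^{\mathsf{rg}}$ is an instance of the submodularity-ratio condition of $\hat f$ at the nested pair $R^{t-1}\subseteq R^{\hat N}\cup R$ (the inclusion uses $R^{t-1}\subseteq R^{\hat N}$, i.e.\ that the iterates of Algorithm~\ref{alg:fwdrev} are increasing) with element $r\notin R^{\hat N}\cup R$; since $\tilde\alpha=\alpha=1-\hat\gamma$ is feasible, the smallest feasible $\tilde\alpha$, namely $\alpha^{\mathsf{rg}}$, is at most $\alpha$.

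The one step that does not transcribe completely mechanically is the scalar bound $\tfrac{1}{1-\alpha}\ge\gamma$ used in~\eqref{eq:revlast} to merge the two subtracted sums into a single one with a common coefficient; after the substitution what is needed is $\gamma^{\mathsf{rg}}\le\tfrac{1}{1-\alpha^{\mathsf{rg}}}$, equivalently $\gamma^{\mathsf{rg}}(1-\alpha^{\mathsf{rg}})\le 1$, and I expect verifying this to be the main point to pin down. It holds trivially once $\gamma^{\mathsf{rg}}$ is read, as $\gamma$ is, as a quantity in $[0,1]$: capping the largest feasible $\tilde\gamma$ at $1$ is harmless, because the inequalities it must satisfy in the proof only get easier as $\tilde\gamma$ decreases, and the cap does not spoil $\gamma^{\mathsf{rg}}\ge\gamma$ since $\gamma\le 1$ — this is the exact analogue of the forward-greedy fact $\gamma^{\mathsf{fg}}(1-\alpha^{\mathsf{fg}})\le 1$. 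With that in hand, $\tfrac{1}{1-\alpha^{\mathsf{rg}}}\ge 1\ge\gamma^{\mathsf{rg}}$ and the computation of~\eqref{eq:revlast} goes through unchanged.

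Finally, the equivalent linear form is pure rearrangement of the first: setting $c=\tfrac{1-\alpha^{\mathsf{rg}}}{1+(1-\gamma^{\mathsf{rg}})(1-\alpha^{\mathsf{rg}})}$, multiplying the ratio bound through by $f(V)-f(S^*)\ge 0$ (monotonicity of $f$) and isolating $f(S^{\mathsf r})$ yields $f(S^{\mathsf r})\le c\,f(S^*)+(1-c)\,f(V)$, which is the second displayed inequality.
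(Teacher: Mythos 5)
Your proposal is correct and follows essentially the route the paper intends: the corollary is obtained by rerunning the proof of Theorem~\ref{thm:rev} and noting that the only instances of the submodularity-ratio and curvature inequalities actually invoked (in~\eqref{eq:upbrev} with $R=R^*_{t-1}$, $r=r^*_t$, and in~\eqref{eq:lwbrev} with $R=R^*$) lie in the families defining $\alpha^{\mathsf{rg}}$ and $\gamma^{\mathsf{rg}}$, while feasibility of $\tilde\alpha=\alpha$ and $\tilde\gamma=\gamma$ gives $\alpha^{\mathsf{rg}}\le\alpha$ and $\gamma^{\mathsf{rg}}\ge\gamma$. Your treatment of the merging step, i.e.\ reading $\gamma^{\mathsf{rg}}\in[0,1]$ so that $\gamma^{\mathsf{rg}}\le 1/(1-\alpha^{\mathsf{rg}})$, correctly resolves a point the paper leaves implicit, and the final algebraic rearrangement matches the stated bounds.
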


   The reverse greedy submodularity ratio and the reverse greedy curvature can be obtained in an ex-post manner after analyzing $\mathcal{O}({\hat{N}}\textstyle\binom{\rvert V\rvert-1}{\hat{N}})$ and $\mathcal{O}({\hat{N}}\textstyle\binom{\rvert V\rvert}{\hat{N}})$ inequalities, respectively. Morover, since $\gamma^{\mathsf{rg}}\geq \gamma$ and $\alpha^{\mathsf{rg}}\leq \alpha$, the performance guarantee in Corollary~\ref{cor:revgr} can essentially be better than the one in Theorem~\ref{thm:rev}. Finally, note that a large value for $f(V)$ can be crucial for the tightness of this guarantee. For instance, in the actuator removal problem of~\cite{guo2019actuator}, when we pick the full set of actuators $V$ to remove from the system, the control energy metric could be infinite.
    
\section{Comparison of the performance guarantees}\label{sec:compar}
For the sake of visualization, we let $f(\emptyset)= -1$, $f(V)=1$, $f(S^{\ast})=F^{\ast}\in[-1,1]$. Figure~\ref{fig:table} shades the area where the guarantee for the forward greedy algorithm is better than the one for the reverse greedy algorithm.
By  decreasing  the  value  of $F^{\ast}$ from $0$ to $-1$,  one  can  observe  that  the  area  where  the  forward greedy  guarantee is better, expands. When $F^{\ast}$ is small, and the function is close to being both submodular and supermodular, the forward greedy guarantee is more desirable. In fact, \cite[Propositions 4 and 5]{guo2019actuator} prove that there is no performance guarantee for the forward greedy algorithm unless both the submodularity ratio and the curvature are utilized, simultaneously.
When $F^{\ast}$ is large enough, $F^{\ast} \ge 0$, the effect of $f(\emptyset)$ on the forward greedy guarantee is more dominant, thus, the reverse greedy outperforms  the forward greedy for all $(\alpha, \gamma)$ pairs.
Note that for other values of $f(\emptyset)$ or $f(V)$, the observations can differ. 
In practice, it could be useful to implement both greedy algorithms (which can be done efficiently with polynomial time complexity) and choose the best out of the two.

\begin{figure}[th!]
    \centering
    \includegraphics[width=0.39\linewidth]{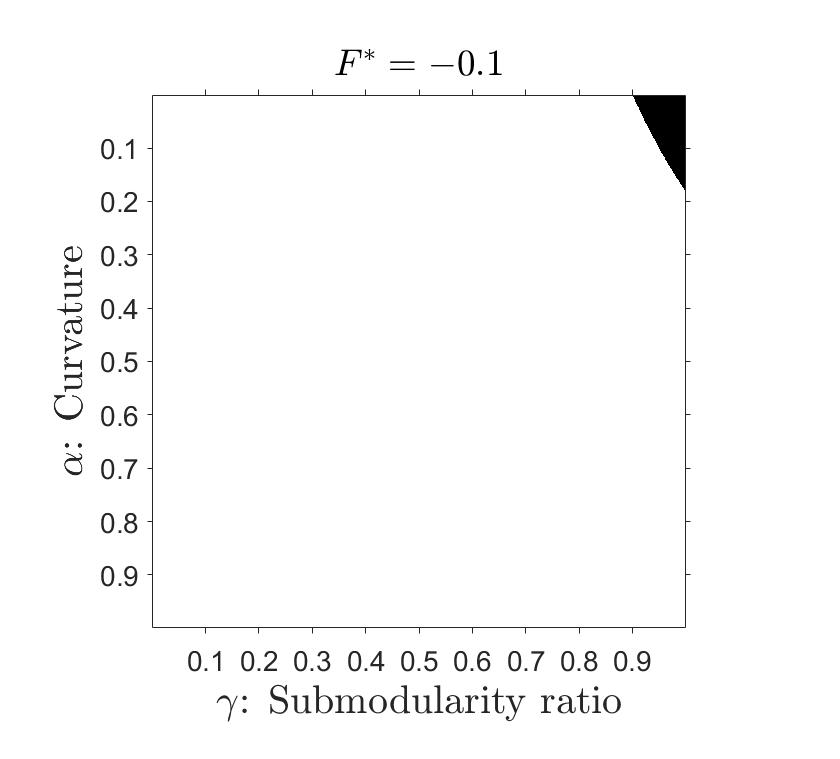}\hspace{.5cm} 
    \includegraphics[width=0.39\linewidth]{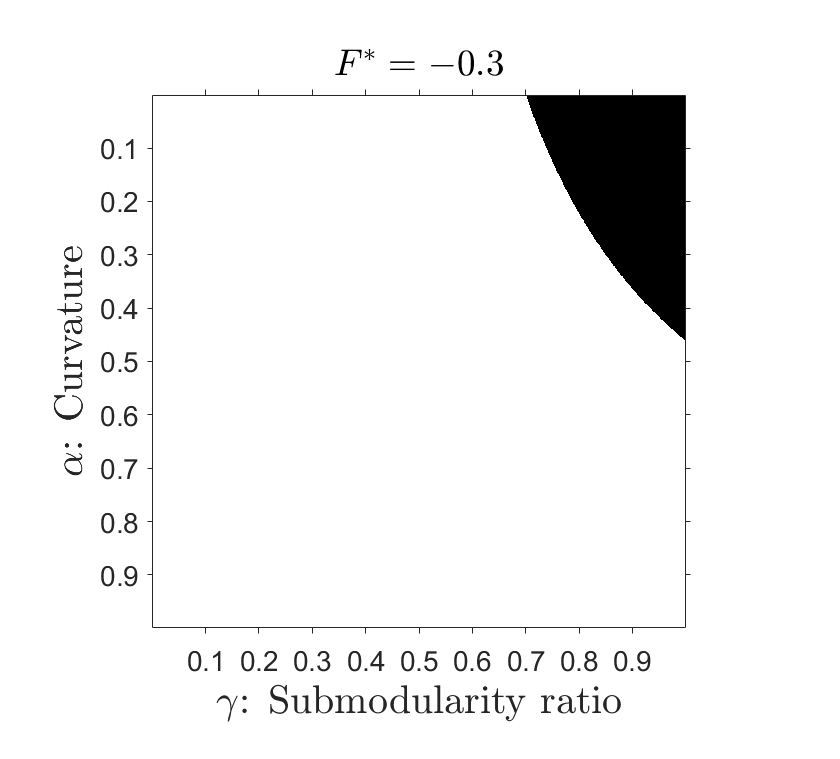}
    \includegraphics[width=0.39\linewidth]{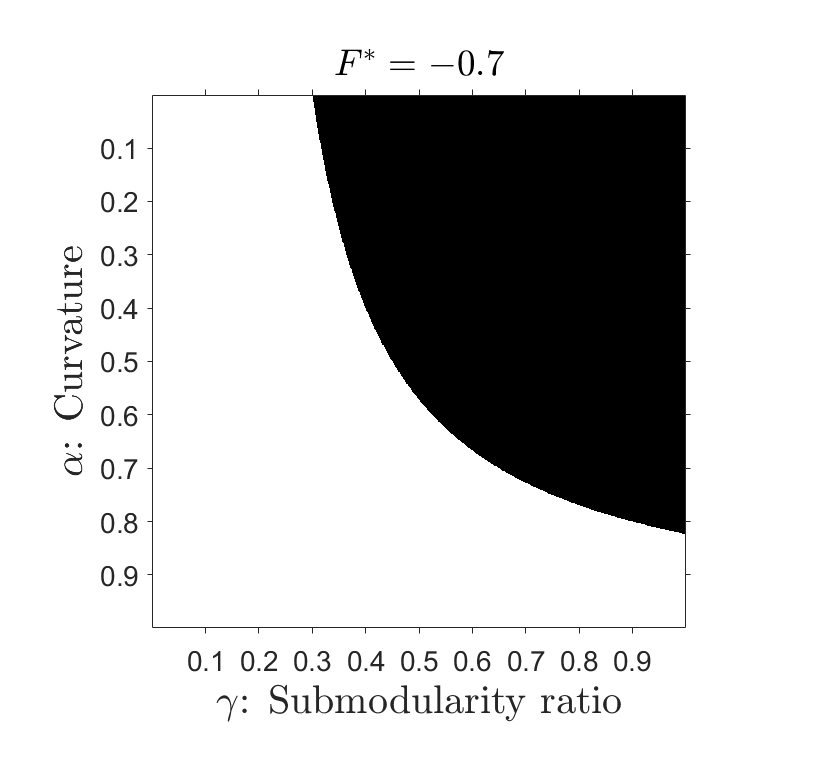}\hspace{.5cm}
    \includegraphics[width=0.39\linewidth]{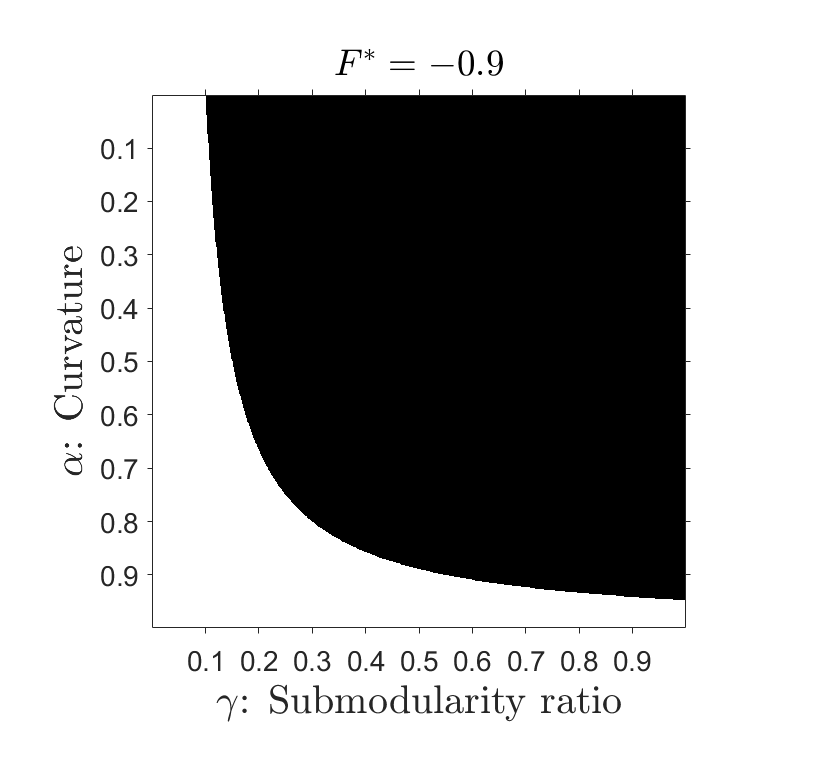}
    \caption{For each $F^*$ value, the shaded regions represent the $(\alpha, \gamma)$ pairs for which the forward greedy algorithm outperforms the reverse greedy algorithm. For $F^*\geq 0$, the reverse greedy algorithm outperforms the forward greedy algorithm for any $(\alpha, \gamma)$ pair.}
    \label{fig:table}
\end{figure}

Our future work will be focused on obtaining the problem instances for which these guarantees are potentially tight.

\section{Acknowledgments}
The authors would like to thank Professor Victor Il’ev for his helpful feedback on existing bounds on the worst-case behaviour of
greedy algorithms, Dr. Yatao An Bian for early discussions on the literature for existing properties of set functions.

\bibliographystyle{IEEEtran}
\newpage
\bibliography{library}

\end{document}